\def\Z{\mathbb{Z}}
\def\Kmn{K_{m\times n}}
\def\B{\mathcal{B}}
\def\G{\Gamma}
\def\m{\overline{m}}
\def\ccup{\mathop{\cup}}
\newtheorem{defini}{Definition}[section]
\newtheorem{prop}[defini]{Proposition}
\newtheorem{lem}[defini]{Lemma}
\newtheorem{cor}[defini]{Corollary}
\newtheorem{thm}[defini]{Theorem}
\theoremstyle{definition}
\newtheorem{rem}[defini]{Remark}
\newtheorem{ex}[defini]{Example}
\begin{document}

\title[Cyclic HCS of the complete multipartite graph]{Cyclic
hamiltonian cycle systems \\ of the complete multipartite graph: \\
even number of parts}

\author{Francesca Merola}
\address{Dipartimento di Matematica e Fisica, Universit\`a Roma Tre, Largo S.L. Murialdo 1, I-00146 Roma, Italy}
\email{merola@mat.uniroma3.it}

\author{Anita Pasotti}
\address{DICATAM - Sez. Matematica, Universit\`a degli Studi di
Brescia, Via
Branze 43, I-25123 Brescia, Italy}
\email{anita.pasotti@unibs.it}

\author{Marco Antonio Pellegrini}
\address{Dipartimento di Matematica e Fisica, Universit\`a Cattolica del Sacro Cuore, Via
Musei 41,
I-25121 Brescia, Italy}
\email{marcoantonio.pellegrini@unicatt.it}

\begin{abstract}
A hamiltonian cycle system (HCS, for short) of a graph $\Gamma$  is a partition of the edges of
$\Gamma$ into hamiltonian cycles.
A HCS is \emph{cyclic} when it is invariant under a cyclic permutation of all the vertices of
$\Gamma$; the existence problem for a cyclic HCS has been completely solved by Buratti and Del Fra
in 2004
when
$\Gamma$ is the complete graph $K_v$, $v$ odd,
 and by Jordon and Morris in 2008 when $\Gamma$ is
the complete graph minus a $1$-factor $K_v-I$, $v$ even. In this work we present a complete solution
to  the existence problem of a cyclic HCS for  $\Gamma = \Kmn$, the complete multipartite graph,
when the number of parts $m$ is even.
We also give necessary and sufficient conditions for the existence of a cyclic and
\emph{symmetric} HCS of $\Gamma$;
the notion of a symmetric HCS  of a graph
$\Gamma$ has been introduced in 2004 by Akiyama, Kobayashi, and Nakamura for   $\Gamma =
K_v$, $v$ odd, in 2011 by Brualdi and Schroeder when $\Gamma = K_v-I$, $v$ even, and,
very recently, by
Schroeder when $\Gamma$ is the complete multipartite graph.
\end{abstract}

\keywords{Hamiltonian cycle; cyclic cycle system; symmetric hamiltonian cycle system; complete multipartite graph.}
\subjclass[2010]{05B30}

\maketitle

\section{Introduction}

Throughout this paper, $K_v$ will denote the complete graph on $v$ vertices
and, if $v$ is even, $K_v-I$ will denote the cocktail party graph of order $v$, namely the graph obtained from $K_v$ by removing a
$1$-factor $I$, that is a set of $\frac{v}{2}$ pairwise disjoint edges.
Also
$\Kmn$  will denote
the complete multipartite graph with $m$ parts of size $n$; if $n=1$,
we may identify $K_{m\times 1}$ with  $K_m$,
while if $n=2$,
$K_{m\times 2}$ is nothing but the cocktail party graph $K_{2m}-I$.

For any graph $\Gamma$
we write $V(\G)$ for the set of its vertices and $E(\G)$ for the set of its edges.
We denote by $C_{\ell}=(c_0,c_1,\ldots,c_{\ell-1})$ the cycle of length $\ell$ whose edges are
$[c_0,c_1],[c_1,c_2],\ldots,[c_{\ell-1},c_0]$.
An $\ell$-\emph{cycle system} of a graph $\G$ is a set $\B$ of cycles of length $\ell$, said \emph{blocks}, whose
edges partition $E(\G)$; clearly a graph may admit a cycle system only if the degree of each vertex is even.
For general background on cycle systems
we refer to the survey \cite{BrRo}.
An $\ell$-cycle system $\B$ of $\G$ is said to be
 \emph{hamiltonian} if $\ell=|V(\G)|$, namely if each cycle of the system
passes through all the vertices of $\G$, and it is said to be
\emph{cyclic}
if we may identify $V(\Gamma)$ with the cyclic group $\Z_v$, and then for any $(c_0,c_1,\ldots,c_{\ell-1})\in\B$, we have also
$(c_0+1,c_1+1,\ldots,c_{\ell-1}+1)\in\B$.
The existence problem for cyclic cycle systems of $K_v$ has generated a considerable amount of interest. Many authors
have contributed to give
 a complete answer in the case $v\equiv 1$ or $\ell\pmod {2\ell}$,
see \cite{BDF2003,BDF2004,Pe,Rosa1,Rosa2,Rosa3,Vietri}.
We point out in particular that the existence problem of a cyclic hamiltonian cycle system for $K_v$ has been solved by Buratti and Del Fra in \cite{BDF2004}, and that for $K_v-I$ the existence problem of a cyclic hamiltonian cycle system has been solved by Jordon and Morris \cite{JM}.

The existence problem for cycle systems of the complete
multipartite graph has not
been solved yet, but we have many interesting recent results on this topic (see for instance \cite{BCS1, BCS2, SC1,SC2}).
Still, very little is known about the same problem with
the additional constraint that the system be cyclic.
We have a complete solution in the following very special cases:
the length of the cycles is equal to the size of the parts \cite{BuG};
the cycles are Hamiltonian and the parts have size two \cite{BMnew,JM}.
We have also some partial results in \cite{BePa}.

Hamiltonian cycle systems (briefly HCS) of $\Kmn$ have been shown to exists  (\cite{LA}) whenever the degree of each vertex of the graph, that is $(m-1)n$, is even; in this paper we  start investigating cyclic HCS  of $\Kmn$, and completely solve the existence problem for complete multipartite graphs with an even number of parts.

We also consider the existence of a \emph{symmetric} HCS for $\Kmn$, a concept recently introduced by
Schroeder in \cite{schr} generalizing the notion of symmetry given in \cite{BS} for cocktail
party graphs: in this definition, a HCS for  $\Kmn$ is \emph{$\phi_n$-symmetric} if each cycle in the system is invariant under a fixed-point-free automorphism of order $n$. We will show that the cycle systems we shall construct in will turn out to be symmetric in this sense.

The paper is organized as follows: after some preliminary notes in Section \ref{se:prel} on the methods we shall use,
in Section \ref{se:ne}
we establish a necessary condition in the case $n$ even for the existence of a cyclic cycle system  (not necessarily hamiltonian) of $\Kmn$
from which we derive a necessary condition for the existence of a cyclic HCS of $\Kmn$.
Then in Section \ref{se:ex} we give a complete solution to the existence problem of a cyclic HCS with an even number of parts,
proving that in
this case the necessary condition we found is also sufficient.
The main result of this paper is the following.
\begin{thm}\label{th:main}
Let $m$ be even; a cyclic and $\phi_n$-symmetric HCS for $\Kmn$ exists if and only if
\begin{itemize}
\item $n$ is even, and
\item if $n\equiv 2 \pmod 4$, then $m\equiv 2 \pmod 4$.
\end{itemize}
\end{thm}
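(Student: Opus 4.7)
I would split the proof into a necessity argument, built on Section~\ref{se:ne}, and an explicit construction for sufficiency.

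\emph{Necessity.} The vertex degree of $\Kmn$ is $(m-1)n$, which must be even for a cycle decomposition to exist; combined with the hypothesis $m$ even, this forces $n$ to be even. For the second condition I would apply the necessary condition for cyclic (not necessarily hamiltonian) cycle systems of $\Kmn$ with $n$ even established in Section~\ref{se:ne}, specialised to the hamiltonian length $\ell=mn$. A $\phi_n$-invariant hamiltonian cycle has stabiliser in $\Z_{mn}$ containing $\langle m\rangle$, hence every $\Z_{mn}$-orbit of such cycles has length a divisor of $m$, and its base cycle has a correspondingly constrained edge-length multiset. The obstruction when $m\equiv 0\pmod 4$ and $n\equiv 2\pmod 4$ should emerge from matching the total number $(m-1)n/2$ of hamiltonian cycles, together with the per-length edge counts $mn$, against the possible orbit-patterns; the smallest instance $m=4$, $n=2$ illustrates this already, as the only translation-invariant HCs of $K_{4\times 2}$ use edges of a single length and the remaining possible orbits of size $2$ turn out never to cover length-$2$ edges.

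\emph{Sufficiency.} In each admissible case I would exhibit starter hamiltonian cycles of $\Kmn$, invariant under $\phi_n\colon x\mapsto x+m$, whose $\Z_{mn}$-translates partition $E(\Kmn)$. Since a $\phi_n$-invariant hamiltonian cycle is determined by its $\langle m\rangle$-quotient---a closed walk of length $m$ in $K_m$ equipped with consistent choices of edge-length representatives from $\Z_{mn}\setminus m\Z_{mn}$---the construction reduces to producing finite length-sequences that (i) lift to simple hamiltonian cycles of $\Kmn$ and (ii) collectively realise each admissible edge-orbit of $\Kmn$ exactly once. I would treat the cases $n\equiv 0\pmod 4$ (for arbitrary even $m$) and $m\equiv n\equiv 2\pmod 4$ separately, possibly further splitting by congruences of $m$ and $n$ modulo higher powers of~$2$.

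The step I anticipate as most delicate is the borderline case $m\equiv n\equiv 2\pmod 4$, where the parity constraints from Section~\ref{se:ne} are tightest and the length multisets to be realised leave the smallest margin. I would begin from small explicit examples and then extend inductively by doubling or concatenation arguments in the spirit of~\cite{BDF2004,JM}. Most of the combinatorial effort will go into verifying that the lifted walks are genuine hamiltonian cycles and that their edge-length multisets collectively tile the edge orbits of $\Kmn$.
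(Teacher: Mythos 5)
Your skeleton coincides with the paper's: necessity from the degree condition (which forces $n$ even when $m$ is even) plus the non-existence result of Section~\ref{se:ne} specialised to $\ell=mn$, and sufficiency from explicit base-cycle constructions split into the cases $m=2$, $n\equiv 0\pmod 4$, and $m\equiv n\equiv 2\pmod 4$ --- exactly the roles played by Corollary~\ref{cor:ne}, Proposition~\ref{prop:bipartito}, Theorem~\ref{thm:m_pari} and Theorem~\ref{thm:mn2}. But as it stands the proposal has genuine gaps in both directions. On necessity, you verify only $K_{4\times 2}$ and assert that the general obstruction ``should emerge'' from matching cycle counts against orbit patterns. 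The actual mechanism (Proposition~\ref{PropParity}) is a specific parity invariant: for $n$ even, the telescoping sum $\sigma(C)=c_0-c_{\ell/d}$ of the partial differences of a base cycle has the same parity as $|Orb(C)|$, and summing over all base cycles while using $\partial\B=\Z_{mn}-m\Z_{mn}$ shows that the number of odd-length orbits is congruent modulo $2$ to $\frac{m(m-1)n^2}{8}$; comparing with $|\B|=\frac{n(m-1)}{2}$ gives the contradiction when $m\equiv 0$ and $n\equiv 2\pmod 4$. Without identifying this invariant, your counting heuristic does not generalise beyond the smallest case. (If you intend simply to cite Corollary~\ref{cor:ne} as a black box, this direction is fine, but note that that corollary concerns arbitrary cyclic HCS, so your appeal to $\phi_n$-invariance of the stabilisers is not needed there.)

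The more serious gap is in sufficiency for $m\equiv n\equiv 2\pmod 4$, which you correctly flag as delicate but for the wrong reason. Here the sum of the orbit lengths of the base cycles equals $|\B|=\frac{n(m-1)}{2}$, which is odd, so some orbit must have odd length; the paper's construction uses exactly one base cycle $G=[0]_\nu$ with full stabiliser, and for $G$ to be a hamiltonian cycle $\nu$ must be a unit of $\Z_{mn}$ lying among the differences left uncovered by the other base cycles. Guaranteeing that such a unit exists in the prescribed set $S(3\m\pm 2,4\m,\frac{n}{2})$ is a genuine arithmetic problem, settled by Lemmas~\ref{lem:prime} and~\ref{lem:ok}; your plan contains no analogue of this step, and the proposed ``doubling or concatenation'' induction in the spirit of \cite{BDF2004,JM} is not what is done here (the constructions are direct for all admissible $m,n$ simultaneously) and offers no mechanism for propagating the coprimality requirement. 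You would also need to dispose of $n=2$ separately via Theorem~\ref{JM}, since the general construction of Theorem~\ref{thm:mn2} assumes $n>2$.
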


The proof of Theorem \ref{th:main} will follow from the various results proved in Sections \ref{se:ne} and \ref{se:ex}.
First, in Corollary \ref{cor:ne} we give the necessary condition for the existence of a cyclic HCS of $\Kmn$.
Then, in Proposition \ref{prop:bipartito} we study the bipartite case, finally in Theorem \ref{thm:m_pari}
and in Theorem \ref{thm:mn2}
 we deal with the case $n\equiv 0\pmod 4$,
and $n\equiv 2\pmod 4$, respectively.

\section{Preliminaries}\label{se:prel}

The main results of this paper will be obtained by using the method of \emph{partial differences}
introduced by Marco Buratti in 2004 and used in many papers, see for instance
\cite{BePaGC, B, BDF2003, BDF2004, BM, BMnew, BR, Vietri}. Here we recall some definitions  and results useful in the rest of the paper.

\begin{defini}
Let  $C=(c_0,c_1,\ldots,c_{\ell-1})$ be an $\ell$-cycle with
vertices in an abelian group $G$ and let $d$ be the order of the stabilizer of $C$ under
the natural action of $G$, that is $d=|\{g\in G : C+g=C\}|$.
The multisets
\begin{eqnarray*}
\Delta C &=& \{\pm(c_{h+1}-c_{h})\ |\ 0\leq h < \ell\},\\
\partial C &=& \{\pm(c_{h+1}-c_{h})\ |\ 0\leq h < \ell/d\},
\end{eqnarray*}
where   the subscripts are taken modulo $\ell$,
are called the \emph{list of differences} from $C$ and
the \emph{list of partial differences} from $C$, respectively.
\end{defini}

More generally, given a set $\B$ of $\ell$-cycles with vertices
in $G$, by $\Delta \B$ and $\partial \B$ one means the union (counting
multiplicities) of all multisets $\Delta C$ and $\partial C$ respectively,
where  $C\in \B$.

We recall that the \emph{Cayley graph on a group $G$ with connection set $\Omega$}, denoted by
$Cay[G:\Omega]$,
is the graph whose vertices are the elements of $G$ and in which two vertices are adjacent if
and only if their difference is an element of $\Omega$.
Note that $\Kmn$ can be interpreted as the Cayley graph $Cay[\Z_{mn}:\Z_{mn}-m\Z_{mn}]$,
where by  $m\Z_{mn}$ we mean the subgroup of order $n$ of $\Z_{mn}$. The vertices of $\Kmn$
will be always understood as elements of $\Z_{mn}$ and the parts of
$\Kmn$ are the cosets of   $m\Z_{mn}$ in $\Z_{mn}$.
The stabilizer and the orbit of any subgraph $\G$ of $\Kmn$ will be understood to be under the natural
action of $\Z_{mn}$ and will be denoted by $Stab(\G)$ and $Orb(\G)$, respectively.
A cyclic HCS of $\Kmn$ is completely determined by 
a set of \emph{base cycles}, namely a complete system of representatives
for the orbits of its cycles under the action of $\Z_{mn}$. A set of base cycles may be obtained using
partial differences, as the next theorem, which
adapts a result contained in \cite{BP}, shows.

\begin{thm}\label{thm:basecycles}
A set $\B$ of $mn$-cycles is a set of base cycles of a cyclic HCS of $\Kmn$
if and only if $\partial \B=\Z_{mn}-m\Z_{mn}$.
\end{thm}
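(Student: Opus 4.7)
The plan is to apply the standard partial-difference machinery. The key identity is: for any cycle $C$ whose stabilizer has order $d$, one has $\Delta C = d\cdot \partial C$ as multisets, because the $d$ translations in $Stab(C)$ cyclically permute the edges of $C$ in blocks of size $d$, so that each partial difference is replicated $d$ times in the full difference list. Combined with $|Orb(C)| = mn/d$ and the translation-invariance of $\Delta$, this yields
\[
\Delta\!\left(\bigcup_{C \in \B} Orb(C)\right) \;=\; \sum_{C \in \B} |Orb(C)|\cdot \Delta C \;=\; mn\cdot \partial\B.
\]
From the Cayley-graph description $\Kmn = Cay[\Z_{mn} : \Z_{mn} - m\Z_{mn}]$, one also reads off $\Delta(\Kmn) = mn\cdot(\Z_{mn}-m\Z_{mn})$, since each $\delta\in\Z_{mn}-m\Z_{mn}$ is produced by the $mn$ ordered pairs $(x,x+\delta)$.

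The necessity direction is now immediate: if $\mathcal{H} := \bigcup_{C\in\B} Orb(C)$ is a cyclic HCS of $\Kmn$, then $\mathcal{H}$ partitions $E(\Kmn)$, whence $\Delta(\mathcal{H}) = \Delta(\Kmn)$, and cancelling the factor $mn$ from the two computations gives $\partial\B = \Z_{mn}-m\Z_{mn}$.

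For the sufficiency direction, which is the delicate step, suppose $\partial\B = \Z_{mn}-m\Z_{mn}$. The identity above still forces $\Delta(\mathcal{H}) = \Delta(\Kmn)$, but one must promote this multiset equality to an actual edge partition. The key observation is that the multiset of edges covered by the cycles of $\mathcal{H}$ is $\Z_{mn}$-invariant, hence on each $\Z_{mn}$-orbit $O_\delta$ of edges of $\Kmn$ every edge is covered with the same integer multiplicity $k_\delta$. Expressing the multiplicity of $\delta$ in $\Delta(\mathcal{H})$ as an appropriate multiple of $k_\delta\cdot|O_\delta|$ (with a small adjustment when $2\delta = 0$) and matching it with the corresponding multiplicity in $\Delta(\Kmn)$ forces $k_\delta = 1$ for every $\delta\in \Z_{mn}-m\Z_{mn}$. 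Thus each edge of $\Kmn$ lies in exactly one cycle of $\mathcal{H}$, and since every $C\in\B$ is an $mn$-cycle, $\mathcal{H}$ is a cyclic HCS with $\B$ as a complete set of base cycles. The main obstacle is exactly this last conversion from a multiset identity to a true edge partition; the $\Z_{mn}$-invariance argument resolves it, and everything else is bookkeeping.
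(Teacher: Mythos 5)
Your argument is correct and complete: the identity $\Delta(Orb(C)) = mn\cdot\partial C$, the computation $\Delta(\Kmn)=mn\cdot(\Z_{mn}-m\Z_{mn})$, and the $\Z_{mn}$-invariance argument promoting the multiset identity to an exact once-covering of each edge orbit together give both directions, and they also guarantee that no cycle uses an edge inside a part. The paper does not actually prove Theorem \ref{thm:basecycles} --- it delegates it to the reference \cite{BP} --- and what you have written is precisely the standard partial-differences counting argument that that reference is invoked for (with the usual tacit convention, consistent with the paper's Definition 2.1 and Remark \ref{rem}, that $Stab(C)$ acts on $C$ by rotations, so that $\Delta C=d\cdot\partial C$).
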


In Example \ref{ex:10x6} we will show how to construct a cyclic HCS of $K_{10\times 6}$  applying
Theorem \ref{thm:basecycles}.

 For our purposes the following notation will be useful.\\
Let $c_0,c_1,\ldots,c_{r-1},x$ be elements of an additive  group $G$, with $x$ of order $d$.
 The closed trail represented by  the concatenation of the sequences
\begin{center}\begin{tabular}{c}
$[c_0,c_1,\ldots,c_{r-1}]$,\cr
$[c_0+x,c_1+x,\ldots,c_{r-1}+x]$,\cr
$[c_0+2x,c_1+2x,\ldots,c_{r-1}+2x]$,\cr
$\cdots$\cr
$[c_0+(d-1)x,c_1+(d-1)x,\ldots,c_{r-1}+(d-1)x]$\cr
\end{tabular}
\end{center}
will be denoted by
\begin{equation}\label{closedtrail}
[c_0,c_1,\ldots,c_{r-1}]_x.
\end{equation}
For brevity, given $P=[c_0,c_1,\ldots,c_{r-1}]$, we write $[P]_x$ for the closed trail $[c_0,c_1,\ldots,$ $c_{r-1}]_x$.

\begin{rem}\label{rem}
Note that $[c_0,c_1,\ldots,c_{r-1}]_x$ is a $(dr)$-cycle if and only
if the elements $c_i$, for $i=0,\ldots,r-1$, belong to pairwise
distinct cosets of the subgroup $\langle x \rangle$ in $G$. Also,  if
$C=[c_0,c_1,\ldots,c_{r-1}]_x$ is a $(dr)$-cycle then
$$\partial C=\{\pm(c_i-c_{i-1})\ | \ i=1,\ldots,r-1\}\cup\{\pm(c_0+x-c_{r-1})\}.$$
We point out that  in the case of cyclic HCS of $\Kmn$,
we have that $dr=mn$, the order of $Stab(C)$ is $d$ and the length of the $\Z_{mn}$-orbit of $C$ is $r$.
\end{rem}

\begin{ex}\label{ex:10x6}
Here we present the construction of a cyclic 
HCS of $K_{10\times 6}$.
Consider the following cycles with vertices in $\Z_{60}$:
$$
C_1  = [0,19,1,17,3,15,6,14,8,12]_{10},\quad
C_2  = [0,29,1,28,2,27,3,26,4,25]_{10},$$
$$C_3=[0,3]_2,\quad C_4=[0,7]_2, \quad C_5=[0,13]_2,\quad C_6=[0]_{17}.$$
One can easily check that $\B=\{C_1,\ldots,C_6\}$ is a set of hamiltonian cycles of $K_{10\times 6}$
and that:
\begin{eqnarray*}
\partial C_1 & = &  \pm\{19,18,16,14,12,9,8,6,4,2\},\\
\partial C_2 & =& \pm \{29,28,27,26,25,24,23,22,21,15\},
\end{eqnarray*}
$$\partial C_3=\pm \{3,1\},\quad \partial C_4=\pm \{7,5\},\quad \partial C_5=\pm \{13,11\},\quad \partial C_6=\pm \{17\}.$$
Hence $\partial \B=\Z_{60}-10\Z_{60}$. So, in view of Theorem \ref{thm:basecycles},
we can conclude that $\B$ is a set of base cycles of a cyclic HCS of $K_{10\times 6}$.\\
Explicitly the required system consists of  the following 27 cycles:
$$\{C_1+i, C_2+i \mid  i=0,\ldots, 9\}\cup \{C_3+i, C_4+i, C_5+i\mid  i=0,1\} \cup \{C_6\}.$$
\end{ex}

A HCS of the complete graph $K_v$, $v$ odd, is said to be {\em symmetric}
if there is an involutory permutation $\phi$ of the vertices of $K_v$ fixing all its cycles;
in the case $v$ even, a HCS of the cocktail party  graph $K_v-I$ is {\em symmetric}
if all its cycles are fixed by the involution  switching all pairs of endpoints
of the edges of $I$. This definition is due to Akiyama, Kobayashi and Nakamura \cite{AKN} in the case $v$ odd, and to
Brualdi and Schroeder \cite{BS} in  the case $v$ even; it is easy to see that a symmetric HCS always exists in the odd case (an example is the well-known Walecki construction), while in the even case we have the following result.

\begin{thm}[Brualdi and Schroeder \cite{BS}]\label{BS}
A symmetric HCS of $K_{v}-I$  exists if and only if
$\frac{v}{2}\equiv 1$ or $2\pmod 4$.
\end{thm}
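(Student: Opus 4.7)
The plan is to identify the vertex set of $K_v-I$ with $\Z_k\times\Z_2$ where $k=v/2$, so that $I$ becomes $\{\{(i,0),(i,1)\}:i\in\Z_k\}$ and the defining involution is $\phi\colon(i,a)\mapsto(i,a+1)$. The first step is to pin down the action of $\phi$ on any $\phi$-invariant Hamiltonian cycle $C$. Since $\phi|_C$ is a fixed-point-free involutory automorphism of the abstract cycle $C_{2k}$, it is either the antipodal rotation by $k$ or a reflection through two opposite edges; but any edge of $K_v$ fixed by $\phi$ lies in $I$ and hence not in $C$, so $\phi|_C$ must be the antipodal rotation. Consequently $c_{i+k}=\phi(c_i)$ for all $i$, and the projection $\pi\colon\Z_k\times\Z_2\to\Z_k$ sends $C$ onto a Hamiltonian cycle of $K_k$ traversed twice.

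Writing $c_i=(v_i,\epsilon_i)$ for $0\le i<k$, each $\phi$-orbit of edges of $C$ is classified as \emph{parallel} or \emph{crossing} by the $\Z_2$-type $\delta_i=\epsilon_i+\epsilon_{i+1}$, with a $+1$ correction at the wrap-around because $c_k=\phi(c_0)$. Pairwise cancellation of the $\epsilon_i$ gives $\sum_{i=0}^{k-1}\delta_i\equiv 1\pmod 2$, so every $\phi$-invariant Hamiltonian cycle has an odd number of crossings. On the other hand, over each edge $\{a,b\}$ of $K_k$ the four lifts in $K_v-I$ split into exactly one parallel and one crossing $\phi$-orbit, so in a symmetric HCS every edge of $K_k$ is a crossing in exactly one of the two symmetric Hamiltonian cycles whose projection covers it. Equating the total number of crossings counted cycle-by-cycle (an odd number summed $k-1$ times) with the count per edge of $K_k$ (a total of $\binom{k}{2}$) forces $\binom{k}{2}\equiv k-1\pmod 2$, which is equivalent to $k\equiv 1,2\pmod 4$.

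For sufficiency, I would construct an explicit symmetric HCS in each admissible residue class. The natural strategy is to start from a Hamiltonian cycle decomposition of the doubled graph $2K_k$ into $k-1$ cycles and then choose a sign vector $\epsilon$ on each base cycle satisfying both the cycle-level parity $\sum\delta_i=1$ and the global condition that each edge of $K_k$ is a crossing in exactly one of its two covering cycles. Classical Walecki-type constructions for $k$ odd and starter-based constructions for $k$ even supply the required decomposition of $2K_k$, and a careful sign assignment on the base cycles then produces the symmetric HCS of $K_v-I$.

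The main obstacle is precisely this global coordination of signs: while the odd-crossings condition can be imposed trivially cycle by cycle, simultaneously arranging the ``exactly one crossing per edge of $K_k$'' condition across the whole system is the real combinatorial content of the proof. A genuine case split between $k\equiv 1$ and $k\equiv 2\pmod 4$ appears unavoidable, since $K_k$ itself admits a Hamiltonian cycle decomposition only when $k$ is odd, and the even case accordingly requires a more delicate starter construction.
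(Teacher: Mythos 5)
The paper you were given does not actually prove this statement: Theorem \ref{BS} is quoted from Brualdi and Schroeder \cite{BS} and used as a black box, so there is no internal proof to compare against. Judged on its own terms, the \emph{necessity} half of your argument is correct and is essentially the standard one. The observation that a fixed-point-free involutory automorphism of an even cycle is either the antipodal rotation or a reflection fixing two opposite edges, and that the latter is excluded because each such fixed edge has the form $\{x,\phi(x)\}$ and hence lies in $I$, is exactly right; the telescoping computation giving $\sum_i\delta_i\equiv 1\pmod 2$, so that every $\phi$-invariant Hamiltonian cycle uses an odd number of crossing edge-orbits, is correct; and the double count is sound, since each of the $\binom{k}{2}$ edges of $K_k$ carries exactly one parallel and one crossing $\phi$-orbit and these must lie in two distinct cycles of the system (each cycle projects to a \emph{simple} Hamiltonian cycle of $K_k$). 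This yields $\binom{k}{2}\equiv k-1\pmod 2$, i.e.\ $k\equiv 1,2\pmod 4$, as required.

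The genuine gap is in sufficiency, where your text is a research plan rather than a proof, and it stops exactly where the work begins. What is needed, for each admissible $k$, is a decomposition of $2K_k$ into $k-1$ Hamiltonian cycles together with a choice of sign vector on each so that, globally, every edge of $K_k$ receives the crossing lift from exactly one of its two covering cycles (the other automatically receiving the parallel lift); the per-cycle odd-crossing parity is then forced, not imposed. Asserting that ``a careful sign assignment then produces'' such a system begs the question: the existence of a globally consistent assignment is precisely the content of the theorem, and in \cite{BS} it requires explicit, separate constructions for $k\equiv 1$ and $k\equiv 2\pmod 4$ — the latter case cannot even start from a Hamiltonian decomposition of $K_k$, which does not exist for $k$ even. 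You also do not verify that your parity obstruction is the \emph{only} one, which is exactly what an explicit construction would establish. As it stands you have proved only the ``only if'' direction.
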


In \cite{BMnew}, the authors study the case of a HCS of $K_v$ which is {\em both} cyclic and symmetric;
their result in the case $v$ even is that
there exists a cyclic and symmetric HCS of $K_v$ for all values for which a cyclic HCS exists, that is for
$\frac{v}{2}\equiv 1$ or $2\pmod4$ and $\frac{v}{2}$ not a prime power.

Very recently Michael Schroeder \cite{schr} studied HCS  for a graph $\Gamma$ in which each cycle is
fixed by a fixed-point free automorphism $\phi_n$ of $\Gamma$ of order $n >2$, so that $V(\Gamma)=mn$ for some $m$;
we shall  call such a HCS {\em $\phi_n$-symmetric}.\\
To admit a $\phi_n$-symmetric HCS, $\Gamma$ must be a subgraph of $K_{m\times n}$, and in \cite{schr}  the existence problem of a $\phi_n$-symmetric HCS for $K_{m\times n}$  is completely solved in the following result.

\begin{thm}[Schroeder \cite{schr}]\label{th:schr}
Let $m\ge2$ and $n\ge1$ be integers such that $(m-1)n$ is even.
A $\phi_n$-symmetric HCS for $K_{m\times n}$  exists if and only if, if  $n\equiv 2\pmod 4$ then $m\equiv 1$ or $2\pmod 4$.
\end{thm}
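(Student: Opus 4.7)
The plan is to treat the two directions separately after establishing a common structural framework.

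Any fixed-point-free automorphism $\phi_n$ of $K_{m\times n}$ of order $n$ has the $m$ parts as its vertex orbits; identifying each part with $\mathbb{Z}_n$ we may assume $\phi_n$ acts as $+1$ simultaneously on every part. Since $\phi_n$ fixes no vertex, it restricts to an order-$n$ rotation on any $\phi_n$-invariant Hamilton cycle (the reflection option, available in principle only when $n=2$, is ruled out because it would force two consecutive vertices of the cycle to lie in the same part). Writing $C=(v_0,\ldots,v_{mn-1})$ with $v_{i+m}=v_i+1$ and setting $d_i=v_{i+1}-v_i\in\mathbb{Z}_n$, the closure condition reads
\[
d_0+d_1+\cdots+d_{m-1}\equiv 1\pmod n.
\]
The $\phi_n$-orbit of each edge between two distinct parts has size exactly $n$ and is labelled by the canonical difference $b-a\in\mathbb{Z}_n$; hence a $\phi_n$-symmetric HCS of $K_{m\times n}$ is precisely a partition of the $\binom{m}{2}n$ edge orbits into $(m-1)n/2$ packets, each being the orbit-set of a labelled Hamilton cycle of the quotient $K_m$ satisfying the telescoping constraint.

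For necessity, assume $n$ is even and reduce the constraint modulo $2$: the sign ambiguity of an orbit label disappears in $\mathbb{Z}_2$, so every cycle in the HCS must use an odd number of orbits carrying an odd label. Summing across the whole HCS and equating with the total count $\binom{m}{2}(n/2)$ of odd-label orbits yields
\[
(m-1)(m-2)n\equiv 0\pmod 8,
\]
which, upon inspecting $m\bmod 4$, becomes exactly the condition ``$m\equiv 1$ or $2\pmod 4$'' when $n\equiv 2\pmod 4$, and is automatic when $n\equiv 0\pmod 4$. For $n$ odd the hypothesis $(m-1)n$ even already forces $m$ odd and no further mod-$2$ obstruction is produced.

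For sufficiency I would produce an explicit construction. Begin with a Hamilton decomposition of the quotient multigraph multi-$K_m$ (edge multiplicity $n$): take $n$ copies of Walecki's decomposition of $K_m$ when $m$ is odd, and combine $n$ copies of a Hamilton decomposition of $K_m-I$ with a careful handling of the $n$ copies of the residual $1$-factor $I$ when $m$ is even. Each quotient Hamilton cycle is then lifted to a $\phi_n$-symmetric Hamilton cycle of $K_{m\times n}$ by assigning a $\mathbb{Z}_n$-label to each quotient edge so that (i) the $n$ copies of every quotient edge collectively receive all labels of $\mathbb{Z}_n$, and (ii) the signed sum of the labels along each quotient Hamilton cycle equals $1$ modulo $n$. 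The main obstacle is in the regime $n\equiv 2\pmod 4$: the mod-$2$ bookkeeping uncovered in the necessity argument must match globally, and this is exactly what the congruence $m\equiv 1,2\pmod 4$ guarantees. I would verify the construction by explicit case analysis on the residues of $m$ and $n$ modulo $4$, checking in each case that all $\binom{m}{2}n$ edge orbits appear exactly once across the lifted system.
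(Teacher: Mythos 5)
First, a point of context: the paper does not prove this statement at all --- Theorem \ref{th:schr} is quoted verbatim from Schroeder's preprint \cite{schr} as a known result, so there is no in-paper proof to compare your argument against. (The paper's own, independent contribution is the \emph{cyclic} analogue: Corollary \ref{cor:ne} and the constructions of Section \ref{se:ex}.) Judged on its own merits, your necessity argument is essentially correct and complete. The reduction to the quotient --- a $\phi_n$-invariant Hamilton cycle is a union of $m$ full edge orbits forming a Hamilton cycle of $K_m$ on the parts, with $\mathbb{Z}_n$-labels satisfying $d_0+\dots+d_{m-1}\equiv u \pmod n$ for a unit $u$ --- is sound (granting, as the paper does, that $\phi_n$ is the specific part-preserving rotation $x\mapsto x+m$; for an arbitrary fixed-point-free order-$n$ automorphism the claim that its orbits are the parts would need justification). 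Since units of $\mathbb{Z}_n$ are odd for $n$ even, each cycle uses an odd number of odd-labelled orbits, and equating $\binom{m}{2}\frac{n}{2}$ with the number $\frac{(m-1)n}{2}$ of cycles modulo $2$ does give $(m-1)(m-2)n\equiv 0\pmod 8$, which is exactly the stated condition. This is a clean counting argument, different in flavour from the paper's partial-difference parity argument (Proposition \ref{PropParity}), which applies only to cyclic systems.

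The genuine gap is the sufficiency direction, which as written is a plan rather than a proof. Everything difficult is deferred to the final sentence: you must exhibit, for each admissible pair $(m,n)$, a Hamilton decomposition of the multigraph $nK_m$ (already nontrivial for $m$ even, where the $n$ copies of the leftover $1$-factor must be absorbed into the cycles) \emph{together with} a labelling in which the $n$ copies of every quotient edge receive all $n$ labels exactly once and every cycle's signed label sum is a unit of $\mathbb{Z}_n$. These two requirements pull against each other --- the first is a global Latin-square-type condition on each edge, the second a local sum condition on each cycle --- and reconciling them is precisely the content of Schroeder's construction; asserting that it works ``by explicit case analysis on the residues of $m$ and $n$ modulo $4$'' does not establish it. Until those labellings are actually produced (or a general lemma guaranteeing them is proved), the ``if'' half of the theorem remains unproven.
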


Note that we shall see the same non-existence condition later on in Corollary \ref{cor:ne}.
It makes sense therefore to study, as done in  \cite{BMnew} for the cocktail party graph, HCS for the complete multipartite graph that are {\em both} cyclic and symmetric.
As noted above, $\Kmn$ is the Cayley graph
$Cay[\Z_{mn}:\Z_{mn}-m\Z_{mn}]$, and we let $\phi_n$ be  the morphism $x\mapsto x+m \pmod{mn}$.
We have the following condition for a cycle in a cyclic cycle system to be $\phi_n$-invariant.

\begin{lem}\label{le:sym}
A cycle $C$ in a cyclic HCS of $K_{m\times n}$ is $\phi_n$-invariant if and only if $n$ divides $|Stab(C)|$ - or equivalently, if $|Orb(C)|$ divides $\frac{mn}{n}=m.$
\end{lem}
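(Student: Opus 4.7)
The plan is to unwind the definitions and exploit that every subgroup of the cyclic group $\Z_{mn}$ is determined by its order. I would begin by observing that $\phi_n$ acts on cycles by translating by $m$: for any cycle $C$ with vertices in $\Z_{mn}$, the image $\phi_n(C)$ is just $C+m$. Hence $C$ is $\phi_n$-invariant if and only if $C+m=C$, i.e.\ if and only if $m\in Stab(C)$.

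Next, I would note that $m$ has order exactly $n$ in $\Z_{mn}$, so $\langle m\rangle$ is the unique subgroup of $\Z_{mn}$ of order $n$ (namely $m\Z_{mn}$). Since $Stab(C)$ is a subgroup of the cyclic group $\Z_{mn}$, it is itself cyclic, and containment of subgroups in a cyclic group is governed by divisibility of orders. Therefore $m\in Stab(C)$ is equivalent to $\langle m\rangle\subseteq Stab(C)$, which is in turn equivalent to $n\bigm||Stab(C)|$. This already yields the first form of the statement.

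For the equivalent formulation in terms of $|Orb(C)|$, I would invoke the orbit--stabilizer relation $|Orb(C)|\cdot |Stab(C)|=|\Z_{mn}|=mn$, so that $|Stab(C)|=mn/|Orb(C)|$. Substituting, $n\bigm||Stab(C)|$ becomes $n\bigm|mn/|Orb(C)|$, i.e.\ $m/|Orb(C)|$ is an integer, namely $|Orb(C)|\bigm|m$. There is no real obstacle here: the proof is essentially a one-line verification once one identifies $\phi_n$ with translation by $m$ and uses the cyclic-group/orbit-stabilizer bookkeeping, so the only thing to be careful about is confirming that $m$ has order exactly $n$ in $\Z_{mn}$ (which is immediate from $\gcd(m,mn)=m$).
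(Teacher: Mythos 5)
Your proof is correct: identifying $\phi_n$ with translation by $m$, noting that $\langle m\rangle=m\Z_{mn}$ is the unique subgroup of order $n$ of the cyclic group $\Z_{mn}$ so that $m\in Stab(C)$ is equivalent to $n\mid|Stab(C)|$, and then applying orbit--stabilizer is exactly the intended argument. The paper states this lemma without proof, evidently regarding it as immediate, and your write-up supplies precisely that routine verification with no gaps.
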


\begin{ex}\label{ex:10x6symsym}
Let us consider once more  the cycles we used in Example \ref{ex:10x6};
we can easily see  that the cycle system is also $\phi_{6}$-symmetric,
since the length of the orbit is $10$ for cycles $C_1$ and $C_2$, $2$ for cycles $C_3, C_4, C_5$ and $1$ for $C_6$.
\end{ex}

\section{Non-existence results}\label{se:ne}

In this section we shall present some non-existence results for cycle systems of the complete multipartite graph $\Kmn$;
the methods used here will be closely related to those used in \cite{BR},
where the case of the cocktail party graph is considered.
The results will concern general cycle systems; we will then apply these results to the hamiltonian case.

The following lemma is an immediate generalization of Lemma 2.1 of \cite{BR}, hence we omit the
proof.

\begin{lem}\label{LemmaNonEsiste}
Let $C=(c_0,c_1,\ldots,c_{\ell-1})$ be a cycle belonging to a cyclic cycle system of $K_{m\times n}$
and let $d$ be the order of $Stab(C)$. Then $Orb(C)$ is an $\ell$-cycle system of
$Cay[\Z_{mn}:\{\pm(c_{i-1}-c_{i})\ |\ 1\leq i \leq \frac{\ell}{d}\}]$.
\end{lem}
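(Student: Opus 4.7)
The plan is to identify explicitly the set $E$ of edges of $\Kmn$ covered by the cycles in $Orb(C)$ and to show that it coincides with the edge set of $\Gamma:=Cay[\Z_{mn}:\Omega]$, where $\Omega=\{\pm(c_{i-1}-c_i):1\le i\le \ell/d\}=\partial C$. First I observe that since $C$ belongs to a cyclic cycle system of $\Kmn$, every translate $C+s$ is again a cycle of the system, and by orbit–stabilizer $|Orb(C)|=mn/d$. Distinct cycles of a cycle system have disjoint edge sets, so the $mn/d$ translates of $C$ form a collection of edge-disjoint $\ell$-cycles partitioning $E$. The task thus reduces to verifying $E=E(\Gamma)$.

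For the inclusion $E\subseteq E(\Gamma)$ I would use the hypothesis $|Stab(C)|=d$: this forces the sequence $(c_i-c_{i-1})_i$ to be periodic with period $\ell/d$ (as already recorded in Remark \ref{rem}), so each directed difference along $C$ lies in $\Omega$; since translation by any $s\in\Z_{mn}$ preserves differences, the edges of every translate $C+s$ likewise have differences in $\Omega$. For the reverse inclusion $E(\Gamma)\subseteq E$ I would run a transitivity argument: the edge set $E$ is $\Z_{mn}$-invariant by construction, and for each fixed $\delta\in\Z_{mn}\setminus m\Z_{mn}$ the group $\Z_{mn}$ acts transitively (via translation) on the set $E_\delta$ of edges of $\Kmn$ whose endpoints differ by $\pm\delta$. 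Consequently $E\cap E_\delta$ is either empty or all of $E_\delta$; and because for every $\delta\in\Omega$ the cycle $C$ itself contains an edge of difference $\pm\delta$, the intersection is the whole of $E_\delta$. Taking the union over $\delta\in\Omega$ gives $E\supseteq\bigcup_{\delta\in\Omega}E_\delta=E(\Gamma)$.

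As the authors describe the statement as an immediate generalization of Lemma 2.1 of \cite{BR}, no individual step is a serious obstacle; the only point that deserves mild attention is to check that the transitivity of the $\Z_{mn}$-action on $E_\delta$ goes through uniformly regardless of whether $2\delta=0$ or $2\delta\ne 0$, but this is a routine verification that does not interfere with the argument above.
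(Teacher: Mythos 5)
The paper itself gives no proof of this lemma (it declares it an immediate generalization of Lemma 2.1 of \cite{BR} and omits the argument), so your proposal can only be judged on its own terms. Its architecture is the right one: the $mn/d$ distinct translates of $C$ are pairwise edge-disjoint because they all belong to the cycle system; the union $E$ of their edge sets is $\Z_{mn}$-invariant; $\Z_{mn}$ acts transitively by translation on the class $E_\delta$ of edges with difference $\pm\delta$, so $E\supseteq E_\delta$ for every $\delta\in\Omega$; and the forward inclusion $E\subseteq E(\Gamma)$ reduces to showing that every difference occurring along $C$ already occurs among the first $\ell/d$ of them. The reverse inclusion and the disjointness are fine exactly as you state them, and the $2\delta=0$ issue you flag at the end is indeed harmless.

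The gap is in the step you dispose of by citing Remark \ref{rem}: the claim that $|Stab(C)|=d$ forces the difference sequence $(c_i-c_{i-1})$ to be periodic with period $\ell/d$. Remark \ref{rem} concerns only cycles of the special form $[c_0,\ldots,c_{r-1}]_x$, which are rotation-invariant by construction, whereas Lemma \ref{LemmaNonEsiste} is about an arbitrary cycle. If $\rho$ generates $Stab(C)$, then translation by $\rho$ induces a fixed-point-free automorphism of the $\ell$-cycle $C$, and this may a priori be a \emph{reflection} rather than a rotation; this can happen when $\ell$ is even and $2\rho=0$ (for instance $C=(0,\rho,1,1+\rho)$ is fixed by $+\rho$, which swaps positions $0,1$ and positions $2,3$). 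In the reflection case the difference sequence is not $(\ell/d)$-periodic, and for longer cycles the multiset $\Delta C$ need not even be contained in $\pm\{c_{i-1}-c_i : 1\le i\le \ell/d\}$, so your inclusion $E\subseteq E(\Gamma)$ breaks down. What rescues the lemma is precisely the hypothesis that $C$ lies in a cycle system: a cyclic group acting faithfully and containing a reflection has order $2$, and a fixed-point-free reflection fixes two edges of $C$ setwise, each of which must then have difference $mn/2$. If $n$ is even such pairs are not edges of $K_{m\times n}$ at all; if $n$ is odd, the $mn/2$ translates of $C$ would each contain two of the only $mn/2$ edges of difference $mn/2$, forcing a repeated edge and contradicting the edge-disjointness you established. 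Hence the stabilizer acts by rotation, and only after this observation does the periodicity claim, and with it your forward inclusion, go through. You should add this exclusion of reflections (or an equivalent counting argument) to close the proof.
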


The next result adapts  Theorem 2.2 of \cite{BR} to the $m$-partite case.

\begin{prop}\label{PropParity}
Let $n$ be an even integer. The number of cycle-orbits of odd length in a cyclic cycle-decomposition of
$K_{m\times n}$ has the same parity of $\frac{m(m-1)n^2}{8}$.
\end{prop}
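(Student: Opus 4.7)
The plan is to reduce the parity of $|\{i : r_i \text{ odd}\}|$ to that of a simple combinatorial quantity---the number $O$ of \emph{odd} $\pm$-pairs in $\Z_{mn} - m\Z_{mn}$ (those pairs $\{d,-d\}$ with $d$ odd as an integer representative)---and then verify $O \equiv \frac{m(m-1)n^2}{8} \pmod 2$ directly. The key algebraic fact is that for each orbit with base cycle $C_i = [c_0, \ldots, c_{s_i-1}]_{x_i}$, the orbit length is $r_i = mn/d_i = \gcd(x_i, mn)$, where $x_i$ generates $Stab(C_i)$. Since $mn$ is even, $r_i$ is odd if and only if $x_i$ is odd as an integer in $\{0,1,\ldots,mn-1\}$.

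Next I would compute $x_i \bmod 2$ via the telescoping identity recorded in the Remark: the $s_i$ directed partial differences of $C_i$ sum to $x_i$. Modulo $2$ signs are irrelevant, and because $mn$ even forces $d \equiv -d \pmod 2$, each $\pm$-pair carries a well-defined parity. Hence $x_i \equiv \alpha_i \pmod 2$, where $\alpha_i$ denotes the number of odd $\pm$-pairs in $\partial C_i$. Combining with the first step gives
$$|\{i : r_i \text{ odd}\}| \;\equiv\; \sum_i [\alpha_i \text{ odd}] \;\equiv\; \sum_i \alpha_i \pmod 2.$$
Now an immediate consequence of Lemma \ref{LemmaNonEsiste} and the edge-count of $\Kmn$ is that the sets $\partial C_i$ partition the $\pm$-pairs in $\Z_{mn} - m\Z_{mn}$ (each pair appears in exactly one $\partial C_i$, with multiplicity $1$), because the orbit of $C_i$ uses exactly $mn$ edges per pair in $\partial C_i$ and these counts must total $mn \cdot \frac{n(m-1)}{2} = |E(\Kmn)|$. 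Therefore $\sum_i \alpha_i = O$.

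The remaining step is a direct evaluation of $O \bmod 2$. Counting odd elements of $\Z_{mn} - m\Z_{mn}$: the subset $m\Z_{mn}$ contains $0$ odd elements if $m$ is even and $n/2$ odd elements if $m$ is odd, yielding $O = mn/4$ or $O = n(m-1)/4$ respectively. A short case analysis on $(m \bmod 4, n \bmod 4)$ then confirms that in every case $O \equiv \binom{m}{2}(n/2)^2 = \frac{m(m-1)n^2}{8} \pmod 2$. The main technical hurdle is the bridge from $r_i$ to $\alpha_i$: one must observe that the two elements of each $\pm$-pair share parity in $\Z_{mn}$, which is exactly the point where the hypothesis ``$n$ even'' enters essentially, as it is what makes $\alpha_i$ a well-defined invariant of $\partial C_i$ depending only on the unordered pair structure.
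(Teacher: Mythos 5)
Your proposal is correct and follows essentially the same route as the paper's proof: your $x_i$ (the stabilizer generator obtained by telescoping the partial differences) is the paper's $\sigma(C_i)$ up to sign, the equivalence ``$r_i$ odd $\Leftrightarrow x_i$ odd'' is the paper's congruence $\sigma(C)\equiv|Orb(C)|\pmod 2$, and your count $O$ of odd $\pm$-pairs is exactly the paper's reduction of the signed sum $\sum_i\sigma(C_i)$ modulo $2$ via $\partial(\cup_i C_i)=\Z_{mn}-m\Z_{mn}$. The final arithmetic identity $O\equiv\frac{m(m-1)n^2}{8}\pmod 2$ checks out in both parity cases for $m$.
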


\begin{proof}
Let $\B$ be a cyclic cycle system of $K_{m\times n}$.
For every $\ell$-cycle $C=(c_0,c_1,\ldots,$ $c_{\ell-1})$ of $\B$ set
$$\sigma(C)=\sum_{i=1}^{\ell/d}(c_{i-1}-c_i)=(c_0-c_1)+(c_1-c_2)+\ldots+(c_{\ell/d-1}
-c_{\ell/d})=c_0-c_{\ell/d},$$
where $d$ is the order of $Stab(C)$.
It is easy to see that $c_{\ell/d}=c_0+\rho$ where $\rho$ is an element of $\Z_{mn}$ of order $d$ and hence we have
$$\sigma(C)=\frac{mnx}{d}\quad \textrm{with}\ \gcd(x,d)=1.$$
Since $n$ is even, we have that $\sigma(C)$ is even if and only if
 $d$ is a divisor of $\frac{mn}{2}$; on the other hand,
since the length of $Orb(C)$ is $\frac{mn}{d}$, also $|Orb(C)|$ is even if and only if
 $d$ is a divisor of $\frac{mn}{2}$. For any cycle $C\in\B$, we thus have that
  \begin{equation}\label{parity}
\sigma(C)\equiv |Orb(C)|\quad \pmod2.
 \end{equation}
 Let $\mathcal{S}=\{C_1,\ldots,C_s\}$ be a set of base blocks of $\B$, that is a complete
system of representatives for the orbits
 of the cycles of $\B$, so that we have
 $$\B=Orb(C_1)\cup Orb(C_2)\cup\ldots \cup Orb(C_s).$$
 By Lemma \ref{LemmaNonEsiste}, the cycles of $Orb(C_i)$ form a cycle system of
 $Cay[\Z_{mn}:\partial C_i]$ hence it results
$$Cay[\Z_{mn}:\Z_{mn}-m\Z_{mn}]=\ccup_{i=1}^{s}Cay[\Z_{mn}:\partial C_i]=Cay\left[\Z_{mn}:\partial \left(\ccup_{i=1}^{s} C_i\right)\right]$$
 so we obtain that
 \begin{equation}\label{partial}
\partial\left(\ccup_{i=1}^{s} C_i\right)=\Z_{mn}-m\Z_{mn}.
 \end{equation}
Note that $\partial C_i$ is a disjoint union of the set of summands of $\sigma(C_i)$ and the set of their opposites.
Hence, by (\ref{partial}), it follows that $\Z_{mn}-m\Z_{mn}$ is a disjoint union of the set of all summands of the sum $\sum_{i=1}^s\sigma(C_i)$
and the set of their opposites.
On the other hand, we have that $\Z_{mn}-m\Z_{mn}=\{\alpha m+1,\alpha m+2,\ldots, (\alpha+1)m-1\ |\ \alpha=0,1,2,\ldots,n-1\}$,
and so we can write
$$\sum_{i=1}^s \sigma(C_i)=s_1+s_2+\dots+s_{m-1}+s_{m+1}+\dots + s_{\frac{nm}{2}-1}$$
where $s_i=i$ or $-i$ for each $i$. So since $i$ and $ -i$ have the same parity,
it follows that
$$\sum_{i=1}^s\sigma(C_i)\equiv1+2+\dots + (m-1)+(m+1)+\dots +\left(\frac{nm}{2}-1\right)\equiv \frac{m(m-1)n^2}{8}\pmod 2.$$
From (\ref{parity}) we have
$$\sum_{i=1}^s|Orb(C_i)|\equiv \frac{m(m-1)n^2}{8}\pmod 2.$$
Hence the number of cycles $C_i$ of $\mathcal{S}$ whose orbit has odd length has the same
parity of $\frac{m(m-1)n^2}{8}$,
and the assertion follows.
\end{proof}

Now we are ready to prove the main non-existence result. In the following given a positive integer
$x$ by $|x|_2$
we will denote the largest $e$ for which $2^e$ divides $x$.

\begin{thm}
Let $n$ be an even integer. A cyclic $\ell$-cycle system of $\Kmn$ cannot exist
in each of the following cases:
\begin{itemize}
\item[(a)] $m\equiv 0\pmod 4$ and $|\ell|_2=|m|_2+2|n|_2-1$;
\item[(b)] $m\equiv 1\pmod 4$ and $|\ell|_2=|m-1|_2+2|n|_2-1$;
\item[(c)] $m\equiv 2,3\pmod 4$, $n\equiv 2\pmod 4$ and $\ell \not\equiv 0 \pmod 4$;
\item[(d)] $m\equiv 2,3\pmod 4$, $n\equiv 0\pmod 4$ and $|\ell|_2 = 2|n|_2$.
\end{itemize}
\end{thm}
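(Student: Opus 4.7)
The plan is to combine Proposition~\ref{PropParity} with a simple edge count to obtain a necessary parity condition, and then to verify that each of the four hypotheses (a)--(d) violates it. Suppose a cyclic $\ell$-cycle system $\B$ of $\Kmn$ exists and let $C_1,\ldots,C_s$ be a set of its base cycles. Then
$$|\B|=\sum_{i=1}^{s}|Orb(C_i)|=\frac{|E(\Kmn)|}{\ell}=\frac{m(m-1)n^2}{2\ell}.$$
Since orbits of even length contribute $0$ and orbits of odd length contribute $1$ to this sum modulo $2$, the number of odd-length orbits is congruent to $|\B|$ modulo $2$. Combined with Proposition~\ref{PropParity}, this gives the necessary condition
$$\frac{m(m-1)n^2}{2\ell}\equiv\frac{m(m-1)n^2}{8}\pmod 2.$$

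The rest of the proof is $2$-adic bookkeeping. Setting
$$v_T=|m|_2+|m-1|_2+2|n|_2-1-|\ell|_2\quad\text{and}\quad v_S=|m|_2+|m-1|_2+2|n|_2-3$$
for the $2$-adic valuations of the two sides of the displayed congruence, I would show that under each hypothesis exactly one of $v_T, v_S$ is zero, forcing a parity mismatch. (If either quantity fails to be a non-negative integer, then the associated divisibility obstruction already rules out the system, which gives non-existence for the same reason.) In cases (a) and (b) the assumption on $m$ makes $|m|_2+|m-1|_2\ge 2$, so $v_S\ge 1$ and $m(m-1)n^2/8$ is even, while the prescribed value of $|\ell|_2$ forces $v_T=0$ and $m(m-1)n^2/(2\ell)$ odd. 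In case (c), with $m\equiv 2$ or $3\pmod 4$ and $n\equiv 2\pmod 4$, we have $|m|_2+|m-1|_2+2|n|_2=1+2=3$, so $v_S=0$, whereas $\ell\not\equiv 0\pmod 4$ gives $|\ell|_2\le 1$ and thus $v_T\ge 1$. In case (d), $|n|_2\ge 2$ together with $|m|_2+|m-1|_2=1$ yields $v_S\ge 2$, while the assumption $|\ell|_2=2|n|_2$ makes $v_T=0$.

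I do not anticipate any real obstacle. The only conceptual input is the opening remark identifying $|\B|\pmod 2$ with the number of odd-length orbits via the edge count; everything else is the four short $2$-adic case analyses above, each producing a parity mismatch with Proposition~\ref{PropParity} and hence the desired contradiction.
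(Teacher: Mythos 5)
Your proposal is correct and follows essentially the same route as the paper: the same edge count $|\B|=m(m-1)n^2/2\ell$, the same identification of $|\B|\bmod 2$ with the number of odd-length orbits, and the same appeal to Proposition~\ref{PropParity} to force $m(m-1)n^2/2\ell\equiv m(m-1)n^2/8\pmod 2$. The only difference is that you write out explicitly the four-case $2$-adic valuation check that the paper leaves to the reader, and your computations there are correct.
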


\begin{proof}
Let $\B$ be an $\ell$-cycle system of $\Kmn$, obviously
$|\B|=|E(\Kmn)|/\ell=mn^2(m-1)/2\ell$. Hence the number of cycle-orbits of odd length of a cyclic
$\ell$-cycle system of $\Kmn$ has the same parity as
$mn^2(m-1)/2\ell$. By Proposition \ref{PropParity}, we have that
$mn^2(m-1)/2\ell\equiv mn^2(m-1)/8\pmod 2$.
Now the conclusion can be easily proved distinguishing four cases according to the congruence class of $m \pmod 4$.
\end{proof}

If  the cycles of the system are hamiltonian, that is if $\ell=mn$, we obtain the following
corollary.

\begin{cor}\label{cor:ne}
Let $n$ be an even integer. A cyclic HCS of $\Kmn$ cannot exist
if both $m\equiv 0,3 \pmod 4$ and $n\equiv 2\pmod 4$.
\end{cor}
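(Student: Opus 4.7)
The plan is to deduce the corollary directly from the preceding theorem by specializing to the hamiltonian length $\ell=mn$ and checking which of the four cases (a)--(d) can be satisfied.

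First I would compute $|\ell|_2=|m|_2+|n|_2$ since $\ell=mn$. I then go through the four cases. In case (a), $m\equiv 0\pmod 4$ and the condition $|\ell|_2=|m|_2+2|n|_2-1$ reduces to $|n|_2=1$, that is $n\equiv 2\pmod 4$; this yields non-existence whenever $m\equiv 0\pmod 4$ and $n\equiv 2\pmod 4$. In case (c), $m\equiv 2,3\pmod 4$ and $n\equiv 2\pmod 4$ with the constraint $\ell\not\equiv 0\pmod 4$: since $n\equiv 2\pmod 4$, the product $mn$ is divisible by $4$ exactly when $m$ is even, so the case $m\equiv 2\pmod 4$ is ruled out and only $m\equiv 3\pmod 4$ survives, yielding non-existence for $m\equiv 3\pmod 4$ and $n\equiv 2\pmod 4$.

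For completeness I would then check that cases (b) and (d) never apply to the hamiltonian length for $n$ even: in (b), $m\equiv 1\pmod 4$ forces $|m-1|_2\ge 2$, so the required identity $|n|_2=|m-1|_2+2|n|_2-1$, i.e.\ $|m-1|_2=1-|n|_2$, fails for any even $n$. In (d), $|\ell|_2=2|n|_2$ forces $|m|_2=|n|_2$, but $m\equiv 2\pmod 4$ gives $|m|_2=1$ (hence $n\equiv 2\pmod 4$, contradicting $n\equiv 0\pmod 4$) and $m\equiv 3\pmod 4$ gives $|m|_2=0$ (contradicting $n$ even). So (b) and (d) contribute no additional obstructions in the hamiltonian setting.

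Combining cases (a) and (c), the only pairs $(m,n)$ with $n$ even that are forbidden by the theorem are exactly those with $n\equiv 2\pmod 4$ and $m\equiv 0\pmod 4$ or $m\equiv 3\pmod 4$, which is the statement of the corollary. The proof is essentially a bookkeeping exercise; the only mildly delicate point is checking case (c) to ensure that $m\equiv 2\pmod 4$ is discarded by the parity condition on $\ell\pmod 4$, which is what prevents the corollary from asserting non-existence in the $m\equiv 2\pmod 4$ range (consistent with the main theorem, Theorem~\ref{th:main}, where $m\equiv 2\pmod 4$ and $n\equiv 2\pmod 4$ is admissible).
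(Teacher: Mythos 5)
Your proposal is correct and is precisely the computation the paper leaves implicit: the corollary is stated as an immediate specialization of the preceding theorem to $\ell=mn$, and your case-by-case check (with $|\ell|_2=|m|_2+|n|_2$) correctly shows that cases (a) and (c) yield exactly the stated obstruction while (b) and (d) are vacuous for hamiltonian length. No discrepancy with the paper's approach.
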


\section{Existence of cyclic and symmetric HCS of $\Kmn$, $m$ even}\label{se:ex}

In this section we present direct constructions of cyclic and symmetric HCS
of the complete multipartite graph with an even number of parts.
Since $(m-1)n$ must be even, if $m$ is even then $n$ is  even too; the condition in Corollary \ref{cor:ne} tells us that, when
$n\equiv 2\pmod 4$, $m$ should also be congruent to $2\pmod 4$.  If these two requirements are met, we will show that
a cyclic and symmetric HCS of $\Kmn$ always exists, and therefore we will prove Theorem \ref{th:main}.

As observed in the Introduction, $K_{m\times 2}=K_{2m}-I$ is  the cocktail party
graph;  thus we can suppose $n\geq 2$, since for $n=2$ we can rely on  the following result.

\begin{thm}[Jordon, Morris \cite{JM}; Buratti, Merola \cite{BMnew}]\label{JM}
For an even integer $v\geq4$ there exists a cyclic and symmetric HCS of $K_v-I$
if, and only if, $v\equiv 2,4\pmod 8$ and $v\neq 2p^\alpha$ where $p$ is an odd prime and
$\alpha\geq 1$.
\end{thm}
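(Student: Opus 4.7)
My plan is to split the proof into necessity and sufficiency, and then reduce both to the partial-differences framework already developed in the paper (now specialized to $n=2$, i.e.\ working in $\Z_v$ with the ``missing'' connection set being $\{0,v/2\}$).

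\textbf{Necessity.} The symmetry condition forces the congruence $v\equiv 2,4\pmod 8$ directly from Theorem \ref{BS}: any symmetric HCS of $K_v-I$ requires $v/2\equiv 1,2\pmod 4$. Independently, a cyclic HCS of $K_v-I$ cannot exist when $v=2p^\alpha$ for an odd prime $p$; I would prove this by a parity count in the spirit of Proposition \ref{PropParity} applied to the case $n=2$, exploiting the fact that the only subgroups of $\Z_{2p^\alpha}$ have orders $1,2,p^i,2p^i$, so the only admissible stabilizer orders for base cycles are very restricted and incompatible with covering $\Z_v\setminus\{0,v/2\}$ by partial differences.

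\textbf{Sufficiency.} Set $m=v/2$, identify $V(K_v-I)$ with $\Z_v$ so that $I=\{\{i,i+m\}:0\le i<m\}$, and let $\phi_2:x\mapsto x+m$ be the switching involution. By Theorem \ref{thm:basecycles} (specialized to $n=2$), a cyclic HCS corresponds to base cycles $\mathcal{B}=\{C_1,\dots,C_s\}$ with $\partial\mathcal{B}=\Z_v\setminus\{0,m\}$, and by Lemma \ref{le:sym} each $C_i$ is $\phi_2$-invariant if and only if $m\in Stab(C_i)$. The cleanest way to build in both constraints at once is to describe every base cycle via the closed-trail notation $[c_0,c_1,\dots,c_{r-1}]_m$ with $r=m$, which by construction is automatically $\phi_2$-symmetric; Remark \ref{rem} then reduces the problem to choosing the entries $c_i$ so that the resulting partial differences $\pm(c_i-c_{i-1})$ and $\pm(c_0+m-c_{r-1})$ together exhaust $\Z_v\setminus\{0,m\}$.

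The construction then proceeds by cases on $v\pmod 8$ and on a nontrivial factorization of $v$. When $v\ne 2p^\alpha$ we can pick factors $v=2ab$ with $a,b>1$ not both trivial mod the relevant prime, which provides enough ``room'' to combine long-orbit base cycles (producing the differences of small order) with short-orbit $[c_0]_x$-type cycles (producing the differences of large order that are forced to sit in short orbits by symmetry). The exceptional case $v=2p^\alpha$ is out of reach precisely because the lack of intermediate subgroups makes this balancing impossible.

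\textbf{Main obstacle.} The hard step is the sufficiency construction. Symmetry pins every base cycle into an orbit of length dividing $m$, which on the one hand simplifies matters (each $C_i$ is a $[\,\cdot\,]_m$-trail) but on the other hand severely restricts how partial differences of order dividing $m$ can appear. Producing an explicit family of $\phi_2$-symmetric sequences whose partial differences tile $\Z_v\setminus\{0,m\}$ requires handling several subcases keyed to both $v\pmod 8$ and the factorization $v=2^a\prod p_i^{\alpha_i}$, and verifying that the exceptional values $2p^\alpha$ are the only obstructions.
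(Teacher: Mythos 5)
This statement is quoted in the paper without proof: it is the combination of the Jordon--Morris classification of cyclic HCS of $K_v-I$ from \cite{JM} with the Buratti--Merola refinement in \cite{BMnew} showing that symmetry can always be added, so there is no internal argument to compare yours against. Judged on its own, your proposal is a reasonable road map but not a proof, and both of its substantive components have genuine gaps. For the necessity of $v\neq 2p^\alpha$, the tool you invoke --- a parity count ``in the spirit of Proposition \ref{PropParity}'' --- cannot deliver it: that count depends only on the residue of $\frac{m(m-1)n^2}{8}$ modulo $2$, i.e.\ only on $v\pmod 8$, and it is exactly what yields $v\equiv 2,4\pmod 8$ (Corollary \ref{cor:ne} with $n=2$); it is blind to whether $v/2$ is a prime power. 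The prime-power obstruction needs a different counting argument. For $v=2p$ it is short: the system has $p-1$ cycles, so every orbit has length $1$ or $2$, and such cycles have stabilizers of order $2p$ or $p$, whose partial differences are all odd, so the even differences in $\Z_{2p}\setminus\{0,p\}$ are never covered. But for $v=2p^\alpha$ with $\alpha\ge 2$, orbits of length $p^j$ and $2p^j$ are available and \emph{can} carry even partial differences, so your assertion that the admissible stabilizer orders are ``incompatible with covering'' is precisely the statement to be proved, and you give no argument for it.

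The sufficiency half is in worse shape: ``$v=2ab$ \dots provides enough room to combine long-orbit base cycles with short-orbit cycles'' describes what a construction should achieve rather than giving one. The entire content of \cite{JM} is an explicit family of base cycles, split into cases according to $v\pmod 8$ and the factorization of $v/2$, together with the verification that their partial differences tile $\Z_v\setminus\{0,v/2\}$; the additional content of \cite{BMnew} is that such base cycles can be chosen with every stabilizer of even order, so that Lemma \ref{le:sym} applies. Your reduction to trails of the form $[c_0,\dots,c_{m-1}]_m$ does correctly encode the symmetry constraint, and the framework you set up (Theorem \ref{thm:basecycles} plus Remark \ref{rem}) is the same machinery the paper uses for its own constructions in Section \ref{se:ex}; but without exhibiting the trails and computing their partial differences, nothing is established. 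As it stands, the proposal proves neither direction beyond the congruence condition $v\equiv 2,4\pmod 8$.
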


We start by considering the complete bipartite graph.

\begin{prop}\label{prop:bipartito}
For any even integer $n$ there exists a cyclic and $\phi_n$-symmetric HCS of $K_{2\times n}$.
\end{prop}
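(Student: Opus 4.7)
The plan is to exhibit explicit base cycles in $\Z_{2n}$ and verify the covering condition of Theorem \ref{thm:basecycles}. Identifying $V(K_{2\times n})$ with $\Z_{2n}$, the two partite sets become the cosets of $2\Z_{2n}$ and the edges of $K_{2\times n}$ correspond to the odd residues $\Z_{2n}-2\Z_{2n}$. I will arrange for every base cycle to have stabilizer containing $\langle 2\rangle=2\Z_{2n}$, so that $|Orb(C)|$ divides $m=2$ and Lemma \ref{le:sym} automatically yields $\phi_n$-symmetry.

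For $n\equiv 0\pmod 4$, I would take the $n/4$ base cycles $C_i=[0,4i-1]_2$ with $i=1,\ldots,n/4$. Since $4i-1$ is odd, $0$ and $4i-1$ lie in distinct cosets of $\langle 2\rangle$, so by Remark \ref{rem} each $C_i$ is a $2n$-cycle; its vertices alternate between the two partite sets, so it is a Hamilton cycle of $K_{2\times n}$. From Remark \ref{rem}, $\partial C_i=\{\pm(4i-1),\pm(4i-3)\}$, and as $i$ ranges over $1,\ldots,n/4$ the union is exactly $\{\pm 1,\pm 3,\ldots,\pm(n-1)\}=\Z_{2n}-2\Z_{2n}$, so Theorem \ref{thm:basecycles} applies.

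For $n\equiv 2\pmod 4$, the same family restricted to $i=1,\ldots,(n-2)/4$ covers every pair of odd residues except $\pm(n-1)$. To cover this last pair I would adjoin one orbit-one base cycle $C_0=[0]_{n-1}$: since $n-1$ is odd and $\gcd(n-1,2n)=1$, $n-1$ generates $\Z_{2n}$, so $C_0$ is a Hamilton cycle with $\partial C_0=\{\pm(n-1)\}$, completing the partition. The main obstacle here is simply to check that all the partial differences tile the odd residues without repetition; the step-by-$4$ choice $a_i=4i-1$ is natural because $\partial[0,a]_2=\{\pm a,\pm(a-2)\}$ picks out two consecutive-by-$2$ odd pairs at a time, and when $n\equiv 2\pmod 4$ the parity obstruction is exactly what forces the extra orbit-one cycle $[0]_{n-1}$.
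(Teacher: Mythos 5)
Your proposal is correct and is essentially the paper's own proof: the authors use exactly the same base cycles $[0,4i+3]_2$ (your $[0,4i-1]_2$ after reindexing) covering the odd differences in consecutive pairs, together with the single short-orbit cycle $[0]_{n-1}$ when $n\equiv 2\pmod 4$, and the same appeal to Lemma \ref{le:sym} for $\phi_n$-symmetry. No gaps.
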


\begin{proof}
Let $n=2\ell$; we need a set $\B$ of base cycles such that
$\partial\B=\pm\{1,3,\dots,2\ell-1\}$.
Let us first assume $\ell$ even. For $i=0,1,\dots,\ell/2-1$ consider the cycle $C_i=[0,4i+3]_2$; we have $\partial C_i=\pm\{4i+1,4i+3\}$, and thus $\B=\bigcup_{i=0}^{\frac{\ell}{2}-1}C_i$ is a set of hamiltonian cycles of $K_{2\times n}$ such that $\partial \B=\Z_{2n}-2\Z_{2n}$.
Now assume that $\ell$ is odd; for  $i=0,1,\dots,\lfloor\ell/2 \rfloor-1$ take $C_i=[0,4i+3]_2$ as above, and add the cycle $C'=[0]_{2\ell-1}$. Now $\B=(\bigcup_{i=0}^{\lfloor\ell/2 \rfloor-1}C_i)\cup C'$ is a set of base cycles for a HCS of $K_{2\times n}$.
This cycle system is also $\phi_n$-symmetric by Lemma \ref{le:sym}, since each cycle belongs to an orbit of length $1$ or $2$.
\end{proof}

\begin{ex}
In this example we construct a cyclic and $\phi_{14}$-symmetric HCS of $K_{2\times 14}$ following the
proof of Proposition \ref{prop:bipartito}.
Note that in this case $n=14$, hence $\ell=7$; so  we have the three cycles
\begin{eqnarray*}
C_0&=&(0,3,2,5,4,7,6,9,8,11,10,13,12,15,14,17,16,19,18,21,20,23,22,25,\\
&&24,27,26,1)\\
C_1&=&(0,7,2,9,4,11,6,13,8,15,10,17,12,19,14,21,16,23,18,25,20,27,22,1,\\
&&24,3,26,5)\\
C_2&=&(0,11,2,13,4,15,6,17,8,19,10,21,12,23,14,25,16,27,18,1,20,3,22,5,\\
&&24,7,26,9)
\end{eqnarray*}
which together with
\begin{eqnarray*}
C'&=&(0,13,26,11,24,9,22,7,20,5,18,3,16,1,14,27,12,25,10,23,8,21,6,19,4,\\
&&17,2,15)
\end{eqnarray*}
form a set of base cycles for a cyclic and $\phi_{14}$-symmetric HCS of $K_{2\times 14}$.
\end{ex}

Now we tackle the case $n\equiv 0\pmod 4$.

\begin{thm}\label{thm:m_pari}
Let $m$ be an even integer and $n\equiv 0 \pmod 4$. Then there exists a cyclic and $\phi_n$-symmetric HCS of $\Kmn$.
\end{thm}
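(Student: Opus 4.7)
Since the case $m=2$ is already handled by Proposition~\ref{prop:bipartito}, we may assume $m\ge 4$, and we write $n=4k$. Viewing $\Kmn$ as $Cay[\Z_{mn}:\Z_{mn}-m\Z_{mn}]$, our goal, in light of Theorem~\ref{thm:basecycles} and Lemma~\ref{le:sym}, is to exhibit a set $\B$ of Hamiltonian cycles whose partial differences $\partial\B$ exhaust $\Z_{mn}-m\Z_{mn}$ (each $\pm$-pair exactly once), and for which every $|Orb(C)|$ divides $m$.

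The fundamental building block will be the ``long'' cycle $C=[c_0,c_1,\ldots,c_{m-1}]_m$. Since $m$ has order $n$ in $\Z_{mn}$, this closed trail is a Hamiltonian cycle whenever the $c_i$ lie in distinct cosets of $m\Z_{mn}$; it then has $|Stab(C)|=n$ and $|Orb(C)|=m$, satisfying the symmetry condition of Lemma~\ref{le:sym}. Its partial-difference list is the multiset
$$\partial C=\pm\{c_{i+1}-c_i\;:\;0\le i\le m-2\}\cup\pm\{c_0+m-c_{m-1}\}$$
of $m$ $\pm$-pairs. Since $\sum_{C\in\B}|Orb(C)|=(m-1)n/2=2(m-1)k$ and $\gcd(m,m-1)=1$, the decomposition $\B$ may be taken to consist solely of long cycles when $m\mid 2k$; otherwise we include a small number of \emph{auxiliary} short cycles, for instance $[c_0,c_1]_x$ with $x$ of order $mn/2$ (orbit length $2$) or $[c_0]_y$ with $y$ a generator of $\Z_{mn}$ (orbit length $1$). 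Both trivially satisfy the symmetry condition, so the required total orbit length can always be realized.

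For the long cycles we adopt a Walecki-style zigzag, taking vertices alternately from the ``low'' and ``high'' halves of $\Z_{mn}$, of the form $c_{2j}=j+\alpha$ and $c_{2j+1}=mn/2-1-j+\beta$ for suitable offsets $\alpha,\beta$ varying from one base cycle to the next. Consecutive differences then form two arithmetic progressions whose values, as $(\alpha,\beta)$ varies over the chosen parameters, sweep through the nonzero residue classes modulo $m$, covering the required partial-difference pairs in each coset of $m\Z_{mn}$. This mechanism is illustrated by the cycle $C_2$ of Example~\ref{ex:10x6}. The auxiliary short cycles, when needed, are chosen to mop up the handful of $\pm$-pairs that the sweep misses; the precise parametrization is expected to depend mildly on the residues of $m$ and $k$ modulo small integers.

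The main obstacle will be the verification that $\partial\B=\Z_{mn}-m\Z_{mn}$ holds with no repetitions. This bookkeeping is best organized by partitioning the target into its $m-1$ nonzero cosets modulo $m$, each of size $n$, and showing that the cycles collectively contribute exactly $n/2$ $\pm$-pairs in each coset. The ``wrap-around'' difference $c_0+m-c_{m-1}$ of each long cycle, whose residue modulo $m$ is forced by the others, is the most delicate to handle: a careful choice of the offsets $\alpha,\beta$ across the various base cycles is needed to ensure that all residue classes receive exactly the right number of partial differences.
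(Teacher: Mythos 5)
Your setup is right---reducing to a set of base cycles with $\partial\B=\Z_{mn}-m\Z_{mn}$ via Theorem~\ref{thm:basecycles} and checking symmetry through Lemma~\ref{le:sym}, and your count $\sum_C|Orb(C)|=(m-1)n/2$ is correct---but the argument stops exactly where the content of the theorem begins. You never specify the offsets $\alpha,\beta$, never exhibit the auxiliary cycles, and never verify that the resulting partial differences cover each coset of $m\Z_{mn}$ exactly once; indeed you state that ``the precise parametrization is expected to depend mildly on the residues of $m$ and $k$'' and that the verification is ``the main obstacle.'' For an existence-by-construction theorem, that verification \emph{is} the proof. Moreover, your plan is more constrained than you acknowledge: a zigzag of the kind you describe contributes $m-1$ consecutive integers (plus one wrap-around) to $\partial C$, and since a run of $m-1$ consecutive integers avoiding $m\Z_{mn}$ must coincide with one of the intervals $\{\alpha m+1,\dots,(\alpha+1)m-1\}$, only $2(m-1)k/m$ of the $2k$ such intervals can be covered by runs; the remaining $2k/m$ intervals would have to be tiled entirely by wrap-around differences, each of which is \emph{forced} modulo $m$ by the rest of its cycle (the signed sum of the partial differences of a long cycle is $\sigma(C)=mx$ with $\gcd(x,n)=1$). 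Arranging these forced leftovers to fill whole intervals is a genuine design problem, not bookkeeping, and nothing in the proposal addresses it.

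The paper sidesteps this difficulty by not insisting on orbit length $m$ for every cycle: writing $m=2^a\m$ with $\m$ odd, it mixes two-point cycles $[P_{i,j}]_2$ of orbit length $2$ (covering certain odd differences in pairs), a zigzag $[Q_{i,1}]_{2\m}$ of orbit length $2\m$, and longer zigzags $[Q_{i,b}]_{2^b\m}$ of orbit lengths $4\m,\dots,2^a\m=m$, whose partial-difference lists are computed explicitly and shown to tile the windows $\pm\left(\{2mi+1,\dots,2mi+2m-1\}\setminus\{2mi+m\}\right)$ for $i=0,\dots,\frac{n}{4}-1$. All these orbit lengths divide $m$, so $\phi_n$-symmetry still follows from Lemma~\ref{le:sym}. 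To salvage your approach you would need, at a minimum, an explicit choice of offsets together with a residue analysis of the wrap-around differences; as written the argument is a program, not a proof.
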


\begin{proof}
We may assume $m\geq 4$, since if $m=2$, the statement follows from Proposition \ref{prop:bipartito}.
We shall first give a construction for $m$ a power of $2$. Let
$m=2^a$ and $n=4t$ with $a>1$ and $t\geq 1$.
Consider the following path:
\begin{eqnarray*}
P_{i,b} & = & [0,2mi+(2^{b+1}-1),1,2mi+(2^{b+1}-2),2,2mi+(2^{b+1}-3),\ldots,\\
&&  (2^{b-1}-1),2mi+(2^{b+1}-2^{b-1})]
\end{eqnarray*}
for all $b=1,\ldots,a$ and $i=0,\ldots,t-1$.
Observe that the elements of $P_{i,b}$ are pairwise distinct module $2^b$
and hence $A_{i,b}=[P_{i,b}]_{2^b}$ is a hamiltonian cycle of $\Kmn$.
One can check that
$$\partial A_{i,b}=\pm(\{2mi+2^{b-1}\} \cup \{2mi+(2^b+1),2mi+(2^b+2),\ldots,2mi+(2^{b+1}-1)\}).$$
It turns out that $ \partial (\cup A_{i,b})=\Z_{mn}-m\Z_{mn}$.
Hence the existence of a cyclic  HCS of $\Kmn$ follows
from Theorem \ref{thm:basecycles}.

Now assume
$m=2^a \m$ with $a\geq 1$ and $\m>1$ odd. Take $n=4t$ with  $t\geq 1$. We start constructing for all $i=0,\ldots,t-1$ the following paths:
\begin{equation}\label{pathP}
P_{i,j}=\left\{
\begin{array}{ll}
\left[0, 2m i+(4j-1) \right] & \textrm{if } j=1, \ldots, \frac{\m -1}{2} \\
\left[0, 2m i+(4j+1) \right] &  \textrm{if } j=\frac{\m+1}{2},\ldots,\m-1
\end{array}\right. ;
\end{equation}
\begin{eqnarray}\label{pathQ}
Q_{i,1} & = & [0, 2mi +(4\m -1),1, 2mi+(4\m-3),3, \ldots, \m-2, 2mi+3\m, \\
\nonumber &&\m+1,2mi+(3\m-1), \m+3, 2mi+(3\m-3),\ldots,2\m-2,\\
&& 2mi+(2\m+2)]; \nonumber
\end{eqnarray}
and, if $a\geq 2$, for all $b=2,\ldots,a$ take also
\begin{eqnarray*}
Q_{i,b} &= &[0, 2mi+(2^{b+1}\m-1), 1, 2mi+(2^{b+1}\m-2), 2,\ldots,(2^{b-1}\m-1),\\
&& 2mi+(2^{b+1}\m-2^{b-1}\m)].
\end{eqnarray*}
Observe that
since $2m i+(4j-1)$ and $2m i+(4j+1)$ are odd, $A_{i,j}=[P_{i,j}]_2$ is a hamiltonian cycle of $\Kmn$ for any $i,j$.
Furthermore, the elements of $Q_{i,b}$ are pairwise distinct module $2^b\m$ and hence $B_{i,b}=[Q_{i,b}]_{2^b\m}$ is a hamiltonian cycle of $\Kmn$ for any $i,b$.
One can check that the
$\partial A_{i,j}=\pm\{2mi+(4j-3),2mi+(4j-1)\}$ for $j=1,\ldots,\frac{\m-1}{2}$ and
$\partial A_{i,j}=\pm\{2mi+(4j-1),2mi+(4j+1)\}$ for $j=\frac{\m+1}{2},\ldots,\m-1$.
Hence for any fixed $i$ we have
$$\partial \left(\ccup_{j=1}^{\m-1} A_{i,j}\right)=
\pm \big(\{2mi+1,2mi+3,2mi+5,\ldots,2mi+(2\m-3)\}\cup$$
$$\{2mi+(2\m+1),2mi+(2\m+3),2mi+(2\m+5),\ldots,2mi+(4\m-3)\}\big).$$
Also,
\begin{eqnarray*}
\partial B_{i,1} & =& \pm (\{2mi+2,2mi+4,2mi+6,\ldots,2mi+2\m-2\} \cup \\
&&\{2mi+2\m+2,2mi+2\m+4,2mi+2\m+6,\ldots,2mi+4\m-2\} \cup \\
&&\{2mi+2\m-1,2mi+4\m-1\})
\end{eqnarray*}
and for $b=2,\ldots,a$
\begin{eqnarray*}
\partial B_{i,b} & =& \pm (\{2mi+2^{b-1}\m\} \cup \{2mi+(2^b\m+1),2mi+(2^b\m+2),\ldots,\\
&&2mi+(2^{b+1}\m-1)\}).
\end{eqnarray*}
It turn out that for every fixed $i$ we have
$$\partial \left(\ccup_{b=1}^a  B_{i,b}\right)=
\pm \big(\{2mi+2,2mi+4,2mi+6,\ldots,2mi+4\m\}\cup$$
$$\{2mi+(4\m+1),2mi+(4\m+2),2mi+(4\m+3),\ldots,2mi+(m-1)\}\cup$$
$$\{2mi+(m+1),2mi+(m+2),2mi+(m+3),\ldots,2mi+(2m-1)\}\big).$$
Let $\B=\left(\cup_{i,j} A_{i,j}\right)\cup \left(\cup_{i,b} B_{i,b}\right)$.
Hence, for what seen above, for every fixed $i$ we have
$$\partial\left(\ccup_{j=1}^{\m-1} A_{i,j} \right) \cup \partial \left(\ccup_{b=1}^a B_{i,b}\right)=
\pm \big(\{2mi+1,2mi+2,2mi+3,\ldots,2mi+(m-1)\}\cup$$
$$\{2mi+(m+1),2mi+(m+2),2mi+(m+3),\ldots,2mi+(2m-1)\}\big)$$
and so
$\partial \B=\Z_{mn}-m\Z_{mn}$. We conclude that $\B$ is a set of base cycles of a cyclic HCS of $\Kmn$.

It is easily seen from Lemma  \ref{le:sym} that these cycle systems are also $\phi_n$-symmetric, since in all cases the length of the orbit of each cycle divides $m$.
\end{proof}

\begin{ex}
Following the proof of Theorem \ref{thm:m_pari} we give here the construction of a set of base cycles of a cyclic
and $\phi_4$-symmetric HCS of $K_{18\times 4}$.
In the notation of the Theorem, $a=1$, $\m=9$ and $t=1$. Take the following cycles:
$$
A_{0,1}  =  [0,3]_2, \quad
A_{0,2}  =  [0,7]_2, \quad
A_{0,3}  =  [0,11]_2, \quad
A_{0,4}  =  [0,15]_2,
$$
$$
A_{0,5}  =  [0,21]_2, \quad
A_{0,6}  =  [0,25]_2, \quad
A_{0,7}  =  [0,29]_2, \quad
A_{0,8}  =  [0,33]_2,
$$
$$B_{0,1}=[0,35,1,33,3,31,5,29,7,27,10,26,12,24,14,22,16,20]_{18}.$$
We have
$$\partial \left(\ccup_{j=1}^8  A_{0,j}\right)=\pm\left(\{1,3,5,\dots,15\}\cup \{19,21,23,\ldots,33\}\right)$$
and
$$\partial B_{0,1}=\pm \left(\{2,4,6,\ldots,16\}\cup\{20,22,24,\ldots,34\}\cup \{17,35\}\right).$$
So, given $\B=\left(\cup_j A_{0,j}\right)\cup B_{0,1}$,
we have $\partial \B=\Z_{72}-18\Z_{72}$.
\medskip

Now, we give the construction of a set of base cycles of a cyclic and $\phi_8$-symmetric HCS of $K_{72\times 8}$.
Notice that $\m=9$ as before, but $a=3$ and $t=1$, so we need to construct a larger number of cycles. For $i=0$ we take
$$
A_{0,1}  =  [0,3]_2, \quad
A_{0,2}  =  [0,7]_2, \quad
A_{0,3}  =  [0,11]_2, \quad
A_{0,4}  =  [0,15]_2,
$$
$$
A_{0,5}  =  [0,21]_2, \quad
A_{0,6}  =  [0,25]_2, \quad
A_{0,7}  =  [0,29]_2, \quad
A_{0,8}  =  [0,33]_2,
$$
\begin{eqnarray*}
B_{0,1} & =& [0,35,1,33,3,31,5,29,7,27,10,26,12,24,14,22,16,20]_{18},\\
B_{0,2} & = & [0,71,1,70,2,69,3,68,\ldots,17,54]_{36},\\
B_{0,3} & =& [0,143,1,142, 2,141,3,140,\ldots,35,108]_{72}.
\end{eqnarray*}
We have
$$\partial\left(\ccup_{j=1}^8  A_{0,j}\right)\cup \partial\left(\ccup_{b=1}^3  B_{0,b}\right)=
\pm \left(\{1,2,3,\ldots,71\}\cup\{73,74,75,\ldots,143\}\right).$$
Furthermore, for $i=1$:
$$
A_{1,1}  =  [0,147]_2, \quad
A_{1,2}  =  [0,151]_2, \quad
A_{1,3}  =  [0,155]_2, \quad
A_{1,4}  =  [0,159]_2,
$$
$$
A_{1,5}  =  [0,165]_2, \quad
A_{1,6}  =  [0,169]_2, \quad
A_{1,7}  =  [0,173]_2, \quad
A_{1,8}  =  [0,177]_2,
$$
\begin{eqnarray*}
B_{1,1} & =& [0,179,1,177,3,175,5,173,7,171,10,170,12,168,14,166,16,164]_{18},\\
B_{1,2} & = & [0,215,1,214,2,213,3,212,\ldots,17,198]_{36},\\
B_{1,3} & =& [0,287,1,286, 2,285,3,284,\ldots,35,252]_{72}.
\end{eqnarray*}
We have
$$\partial\left(\ccup_{j=1}^8  A_{1,j}\right)\cup \partial \left(\ccup_{b=1}^3  B_{1,b}\right)=
\pm (\{145,146,147,\ldots,215\}\cup\{217,218,219,\ldots,287\}).$$
So, given $\B=\left(\cup_{i,j} A_{i,j}\right)\cup \left(\cup_{i,b} B_{i,b}\right)$,
we have $\partial \B=\Z_{576}-72\Z_{576}$.
\end{ex}

The following definition and lemmas are instrumental in proving Theorem \ref{thm:mn2}, where we shall
settle the case $n\equiv 2\pmod 4$.

\begin{defini}\label{defi:esse}
For all positive integers $s,d$ and all odd integers $w\geq 3$, set
$$S(s,d,w)=\left\{s,s+d,s+2d,s+3d,\ldots,s+\frac{w-3}{2}d\right\}$$
and 
$$\varphi(s,d,w)=\left|\{x \in S(s,d,w) : \gcd(x,w)=1\}\right|.$$
\end{defini}

\begin{lem}\label{lem:prime}
Assume $w=p^\alpha$ for some odd prime $p$.
Then
$\varphi(s,d,p^\alpha)=0$ if, and only if, either
$\gcd(s,d,p)>1$ or $w=3$ and  $3$ divides $s$.
\end{lem}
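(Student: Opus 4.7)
The plan is to reduce the coprimality condition to non-divisibility by $p$ and do a short case analysis according to how $p$ relates to $s$ and $d$. Since $w=p^\alpha$ is a prime power, for any integer $x$ we have $\gcd(x,w)=1$ iff $p\nmid x$, so $\varphi(s,d,p^\alpha)$ counts the indices $k\in\{0,1,\dots,(w-3)/2\}$ with $s+kd\not\equiv 0\pmod p$.

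For the ``only if'' direction, both sufficient conditions immediately give $\varphi=0$. If $\gcd(s,d,p)>1$, then, $p$ being prime, $p\mid s$ and $p\mid d$, hence $p\mid s+kd$ for every $k$. If instead $w=3$ and $3\mid s$, then $S(s,d,3)=\{s\}$ whose single element is divisible by $p=3$. For the converse, I assume $\gcd(s,d,p)=1$ and that we are not in the configuration ``$w=3$ and $3\mid s$'', and exhibit an element of $S(s,d,w)$ coprime to $p$. I would split into two subcases. If $p\mid d$, the hypothesis forces $p\nmid s$, so every element of $S$ reduces to $s\pmod p$ and $\varphi=(w-1)/2\ge 1$. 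If $p\nmid d$, then $d$ is invertible mod $p$ and the bad congruence $s+kd\equiv 0\pmod p$ is satisfied by a unique residue class of $k$; whenever $(w-1)/2\ge 2$ (i.e., $w\ge 5$) the two elements $s$ and $s+d$ already lie in distinct residues mod $p$, so at least one is nonzero mod $p$ and $\varphi\ge 1$. The only leftover is $w=3$ with $p\nmid d$, where $S=\{s\}$ and the standing assumption $3\nmid s$ yields $\varphi=1$.

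The main, and quite minor, obstacle is the bookkeeping: one must verify that the exceptional case ``$w=3$ and $3\mid s$'' really does arise only in the subcase $p\nmid d$, and that it is the unique obstruction there beyond $\gcd(s,d,p)>1$. Everything else is routine modular arithmetic.
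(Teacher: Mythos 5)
Your proof is correct and uses essentially the same elementary argument as the paper: reduce coprimality to $p^\alpha$ to non-divisibility by $p$ and exploit that $S(s,d,w)$ is an arithmetic progression with $\frac{w-1}{2}$ terms, with the $w=3$ singleton case isolated separately. The only cosmetic difference is that you prove the ``$\varphi=0\Rightarrow$'' direction by contrapositive with a split on whether $p\mid d$, whereas the paper argues directly that $p$ must divide $s$ and (when $|S|>1$) also $s+d$, hence $d$; the content is identical.
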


\begin{proof}
First, suppose that $\varphi(s,d,p^\alpha)=0$. Hence $p$ must divide
every element of $S(s,d,p^\alpha)$ and in particular $p$ divides $s$. If
$|S(s,d,p^\alpha)|=1$, then $p^\alpha=3$. In this case $w=3$ and $3$ divides $s$.
If $|S(s,d,p^\alpha)|>1$, the prime $p$ divides $s+d$ and so divides $d$.  It follows that $p$
divides $s,d$ and hence $\gcd(s,d,p)>1$.

Suppose now that $\gcd(s,d,p)>1$, namely that $p$ divides both $s$ and $d$.
Then $p$ divides every element of
$S(s,d,p^\alpha)$ and so
$\varphi(s,d,p^\alpha)=0$.
Finally, suppose $w=3$ and $s=3s_1$. Then $3$ divides the unique element of $S(s,d,3)=\{s\}$,
and hence $\varphi(s,d,3)=0$.
\end{proof}

\begin{lem}\label{lem:ok}
Assume that $\gcd(s,d,w)=1$ and $\gcd(3,s)=1$. Then
$\varphi(s,d,w)>0$.
\end{lem}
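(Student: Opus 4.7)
The plan is to work prime-by-prime. I would factor $w = p_1^{\alpha_1} \cdots p_k^{\alpha_k}$ into distinct odd prime powers and note that an element $s + jd \in S(s,d,w)$ is coprime to $w$ if and only if $p_i \nmid s + jd$ for each $i$. When $p_i \mid d$, one has $s + jd \equiv s \pmod{p_i}$ for every $j$, and the hypothesis $\gcd(s,d,w) = 1$ forces $p_i \nmid s$, so such primes never obstruct coprimality. When $p_i \nmid d$, the divisibility $p_i \mid s + jd$ occurs precisely for $j$ in a single forbidden residue class $b_i := -s d^{-1} \pmod{p_i}$.

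Next I would set $P_2 = \{p_i : p_i \nmid d\}$ and $Q = \prod_{p_i \in P_2} p_i$. By the Chinese Remainder Theorem the ``good'' integers $j$ (those avoiding every $b_i$ modulo $p_i$) form a periodic subset of $\Z$ of period $Q$ with exactly $\prod_{p_i \in P_2}(p_i - 1)$ representatives per period. The key arithmetic observation is that $Q \mid w$ and, since $w$ is odd, the quotient $w/Q$ is an odd positive integer, equal to $1$ exactly when $w$ is squarefree and $\gcd(d,w) = 1$. In every other case $w/Q \geq 3$, so $(w-1)/2 \geq (3Q - 1)/2 \geq Q$; the interval $[0, (w-3)/2]$ then already spans a full period and contains at least $\prod_{p_i \in P_2}(p_i - 1) \geq 1$ good values. (If $P_2 = \emptyset$, then $s$ itself is coprime to $w$.) This settles the lemma in every case except $w$ squarefree with $\gcd(d,w) = 1$.

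The hard part will be this remaining squarefree case, where $Q = w$. For $w = 3$ one has $S(s,d,3) = \{s\}$ and the hypothesis $\gcd(3, s) = 1$ closes the subcase; this is exactly where that assumption enters. For squarefree odd $w \geq 5$ I would estimate directly the bad set $B \subseteq [0, (w-3)/2]$: the union bound gives $|B| \leq \sum_i \lceil (w-1)/(2 p_i) \rceil$, which is strictly less than $(w-1)/2$ whenever $w$ has few or only small prime factors; in general one applies inclusion-exclusion to obtain at least $\tfrac{w-1}{2} \prod_{p \mid w}(1 - 1/p) - O(2^k)$ good values of $j$, a quantity positive for all but finitely many squarefree odd $w$, which must then be verified directly. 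I expect the main technical difficulty to lie in ensuring that the inclusion-exclusion error does not swamp the main term when $w$ has many small prime divisors.
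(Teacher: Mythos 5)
Your reduction is sound up to the last case: primes of $w$ dividing $d$ never obstruct coprimality because $\gcd(s,d,w)=1$, and whenever $w/Q\geq 3$ the index range $\{0,1,\ldots,\frac{w-3}{2}\}$ has length $\frac{w-1}{2}\geq Q$, hence contains a complete residue system modulo $Q$ and therefore a good index. But the case you leave open --- $w$ squarefree with $\gcd(d,w)=1$, where the available indices cover only half a period --- is precisely where the content of the lemma lies, and there your argument is not a proof. The statement that the inclusion-exclusion count $\frac{w-1}{2}\prod_{p\mid w}(1-1/p)-O(2^k)$ is ``positive for all but finitely many squarefree odd $w$'' is not automatic: the main term is essentially $\phi(w)/2=\frac{1}{2}\prod_i(p_i-1)$, which in the worst case (all prime factors small) is of the same order as the error bound $2^k-1$, exactly the scenario you flag as the ``main technical difficulty'' without resolving it. The claim can in fact be salvaged --- one checks that $\frac{w-1}{2}\prod_i\frac{p_i-1}{p_i}>2^k-1$ holds whenever $\prod_i\frac{p_i-1}{2}\geq 3$, which fails only for $w\in\{3,5,15\}$, and those residual cases are settled by hand ($w=5$ and $w=15$ by the union bound, $w=3$ by the hypothesis $\gcd(3,s)=1$) --- but none of this quantitative work appears in your write-up, the exceptional set is neither bounded nor identified, and you explicitly state that you do not know whether the error can be controlled. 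As written, the hardest part of the lemma is deferred rather than proved.

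It is worth comparing with the paper's argument, which avoids counting altogether. The authors induct on the number $k$ of distinct prime factors of $w$: writing $w=ap_k^{\alpha_k}$, the inductive hypothesis produces $s+id\in S(s,d,a)$ coprime to $a$, and then Lemma \ref{lem:prime} applied to $S(s+id,ad,p_k^{\alpha_k})$ (legitimately, since $p_k>3$ when $k>1$) yields an element coprime to $p_k$ as well; the index bookkeeping $i+ja\leq\frac{a-3}{2}+\frac{p_k^{\alpha_k}-3}{2}\,a\leq\frac{w-3}{2}$ shows this element lies in $S(s,d,w)$. This trades your global sieve estimate for a purely local, prime-power-at-a-time argument in which the hypothesis $\gcd(3,s)=1$ enters only through the base case $w=3$ (just as it does in your sketch). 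If you wish to keep the sieve approach you must supply the missing inequality and the finite verification; otherwise the induction is both shorter and free of error terms.
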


\begin{proof}
First, write $w=p_1^{\alpha_1}p_2^{\alpha_2}\cdots p_k^{\alpha_k}$ where
$3\leq p_1<p_2<\ldots<p_k$ are all primes.
We proceed by induction on $k$. Let $k=1$, that is $w=p_1^{\alpha_1}$.
By way of contradiction, suppose $\varphi(s,d,w)=0$, hence by  Lemma \ref{lem:prime}, we get either $\gcd(s,d,w)>1$ or $3$
divides $s$. In both cases, we get a contradiction with our hypothesis.\\
So, let $k>1$ and write $w=ap_k^{\alpha_k}$, where $a=p_1^{\alpha_1}p_2^{\alpha_2}\cdots
p_{k-1}^{\alpha_{k-1}}\geq 3$. By the induction hypothesis the set $S(s,d,a)$ contains an element
coprime to $a$. Let $s+id $ be this element, where $0\leq i\leq \frac{a-3}{2}$.
Now, the set $S(s+id, ad, p_k^{\alpha_k})$ contains only elements that are coprime to $a$.
If $\varphi(s+id,ad,p_k^{\alpha_k})=0$, then by Lemma \ref{lem:prime} we get $\gcd(s+id,ad,p_k)>1$
(since $p_k>3$).
Since $\gcd(a,p_k)=1$, it follows that $p_k$ divides $d$ and so divides $s$. We conclude that
$\gcd(s,d,w)>1$, in contrast with our hypothesis.
Hence, $\varphi(s+id,ad,p_k^{\alpha_k})>0$.
So there exists an element $x=s+id+jad$, for some $0\leq j\leq \frac{p_k^{\alpha_k}-3}{2}$,
that is coprime with both $a$ and $p_k$. Since $i+ja\leq \frac{w-3}{2}$, we obtain that $x\in
S(s,d,w)$, i.e.  $\varphi(s,d,w)>0$.
\end{proof}

\begin{thm}\label{thm:mn2}
Let $m,n$ be integers with $m,n\equiv 2 \pmod 4$.
Then there exists a cyclic and $\phi_n$-symmetric HCS of $\Kmn$.
\end{thm}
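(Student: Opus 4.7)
The plan is to construct explicitly a set $\B$ of base hamiltonian cycles of $\Kmn$ with $\partial \B = \Z_{mn} - m\Z_{mn}$; by Theorem \ref{thm:basecycles} this yields a cyclic HCS, which will be $\phi_n$-symmetric by Lemma \ref{le:sym} provided every orbit length divides $m$. Writing $m = 2\bar m$ and $n = 2n'$ with $\bar m, n'$ odd, the degenerate cases $\bar m = 1$ and $n' = 1$ are disposed of by Proposition \ref{prop:bipartito} and Theorem \ref{JM} respectively (for the latter one checks $v = 2m \equiv 4 \pmod 8$, and that $v$ is not of the form $2p^\alpha$ with $p$ an odd prime, since $m$ is even). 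We henceforth assume $\bar m, n' \geq 3$.

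Modelled on the $K_{10\times 6}$ construction of Example \ref{ex:10x6}, the set $\B$ will consist of three kinds of base cycles:
\begin{itemize}
\item[(i)] \emph{Short cycles} $A_j = [0, a_j]_2$ with $a_j$ odd (e.g.\ $a_j = 4j-1$ for $j$ in a suitable range), each contributing the partial differences $\pm\{a_j - 2, a_j\}$ and having orbit of length $2$;
\item[(ii)] \emph{Walecki-type cycles} $B_k = [P_k]_m$, with $P_k = [c_0, \ldots, c_{m-1}]$ a zigzag path whose entries are pairwise distinct modulo $m$; since the order of $m$ in $\Z_{mn}$ is $n$, each $B_k$ is a hamiltonian cycle with orbit of length $m$, contributing $m$ partial differences;
\item[(iii)] a single \emph{closing cycle} $T = [0]_x$ with $x$ odd and $\gcd(x, \bar m n') = 1$, which is hamiltonian with orbit of length $1$ and contributes $\pm x$.
\end{itemize}
The orbit lengths $2, m, 1$ all divide $m$, so $\phi_n$-symmetry follows at once from Lemma \ref{le:sym}.

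The $A_j$ will be chosen to cover a block of small odd differences; the $B_k$ will be designed, in the style of $C_1$ and $C_2$ in Example \ref{ex:10x6}, so that their partial differences exhaust all even elements of $\Z_{mn} - m\Z_{mn}$ together with the remaining large odd ones, leaving exactly one odd value $x$ uncovered, which is then supplied by $T$.

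The crux is that this residual odd $x$ must be coprime to $mn = 4\bar m n'$. This is precisely where Lemmas \ref{lem:prime} and \ref{lem:ok} enter: $x$ will be shown to lie in an arithmetic progression $S(s, d, w)$ with $w = \bar m n'$ (odd), and the parameters $(s, d)$ arising from the construction will be verified to satisfy $\gcd(s, d, w) = 1$ and $\gcd(3, s) = 1$; Lemma \ref{lem:ok} then produces an $x \in S(s, d, w)$ coprime to $w$, which combined with $x$ odd gives $\gcd(x, mn) = 1$ as required. The hard part will be engineering the zigzag paths $P_k$ so that the residual AP really does pass the hypotheses of Lemma \ref{lem:ok}; in particular, borderline cases (notably $\bar m = 3$, where Lemma \ref{lem:prime} permits $\varphi(s, d, 3) = 0$ when $3 \mid s$) will likely require a separate ad hoc construction. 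Once the three families are in place, the proof is concluded by the bookkeeping verification that $\partial \B = \Z_{mn} - m\Z_{mn}$.
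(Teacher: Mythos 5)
Your outline matches the paper's strategy: three families of base cycles with orbit lengths $2$, $m$ and $1$ (short cycles $[0,a]_2$, Walecki-type cycles $[\,\cdot\,]_m$, and one fixed cycle $[0]_\nu$), $\phi_n$-symmetry via Lemma \ref{le:sym}, reduction of the degenerate cases to Proposition \ref{prop:bipartito} and Theorem \ref{JM}, and Lemmas \ref{lem:prime}--\ref{lem:ok} to guarantee that the single leftover odd difference $\nu$ can be chosen coprime to $mn$. That said, what you have written is a plan rather than a proof. The combinatorial core --- the explicit zigzag paths, the computation of their partial differences, and the verification that $\partial\B=\Z_{mn}-m\Z_{mn}$ --- is entirely deferred, and it is not routine: the paths must be engineered so that the uncovered odd difference can be placed at any of the admissible positions $s+2m\kappa$, $0\le\kappa\le (n-6)/4$, which in the paper forces a split into the families $A_{i,j},B_i$ for levels $i\le\kappa$, the shifted families $C_{i,j},D_i$ for levels $i>\kappa$, and the special families $E_j$ (with one cycle $\widetilde R_u$ omitted) and $F$ at level $\kappa$ itself.

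More concretely, the one place where you commit to parameters is miscalibrated. You propose to apply Lemma \ref{lem:ok} with $w=\m n'$ (where $m=2\m$, $n=2n'$), so that the lemma directly delivers coprimality to $\m n'$. But $S(s,d,\m n')$ has $(\m n'-1)/2$ elements, while the construction only offers $(n'-1)/2=(n-2)/4$ realizable positions for the residual difference (one per level $\kappa$); the element the lemma produces will in general not correspond to an admissible $\kappa$. The paper instead takes $w=n/2$, exactly matching the number of available positions, and obtains coprimality to $\m$ for free by choosing $s=3\m\pm2$ (so $\nu\equiv\pm2\pmod{\m}$ and $\nu$ is odd, whence $\gcd(\nu,mn)=1$); the sign is fixed by the congruence class of $m$ modulo $8$ so that the omitted index $u=(3\m\pm1)/4$ is an integer. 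This same choice makes $\gcd(3,s)=1$ automatic, so the separate ad hoc treatment you anticipate for $\m=3$ is not needed. In short: right architecture, but the construction is missing and the arithmetic-progression argument as calibrated would fail.
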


\begin{proof}
In view of Propositions \ref{prop:bipartito} and Theorem \ref{JM} we may assume
$m=2\m > 2$ and $n=4t+2> 2$.
Using the notation of Definition \ref{defi:esse} take
$$s=\left\{
\begin{array}{ll}
3\m+2 & \textrm{ if } m\equiv 2 \pmod 8\\
3\m -2 & \textrm{ if } m\equiv 6 \pmod 8\\
\end{array}
\right. ,$$
$d=4\m$ and $w=\frac{n}{2}$.
In view of Lemma \ref{lem:ok}, we obtain that the set $S(3\m\pm 2, 4\m,\frac{n}{2})$
contains an element $\nu=s+2m\kappa$  coprime
with $\frac{n}{2}$, where $0\leq \kappa \leq \frac{n-6}{4}$.
It is useful for the following to observe that $\gcd(\nu,mn)=1$, as $\gcd(3\m\pm 2, \m)=1$.

For all $i=0,\ldots,\kappa$ consider the paths $Q_{i,1}$ as in \eqref{pathQ}
and, if $\kappa\geq1$, for all $i=0,\ldots,\kappa-1$ consider the paths $P_{i,j}$ as in \eqref{pathP}.
If $t\geq \kappa+2$, for all $i=\kappa+1,\ldots,t-1$, take also the following paths:
\begin{eqnarray*}
\widetilde P_{i,j}& = &\left\{
\begin{array}{ll}
\left[0, (2i+1)m+(4j-1) \right] & \textrm{if } j=1, \ldots, \frac{\m -1}{2} \\
\left[0, (2i+1)m+(4j+1) \right] &  \textrm{if } j=\frac{\m+1}{2},\ldots,\m-1
\end{array}\right. ;\\
\widetilde Q_{i} & = & [0, (2i+1)m +(4\m -1),1, (2i+1)m+(4\m-3),3, \ldots, \m-2,\\
&&(2i+1)m+3\m, \m+1,(2i+1)m+(3\m-1), \m+3, \\
&&(2i+1)m+(3\m-3),\ldots,2\m-2, (2i+1)m+(2\m+2)].
\end{eqnarray*}
Also, define
$$u=\left\{
\begin{array}{ll}
\frac{3\m+1}{4} & \textrm{if } m\equiv 2 \pmod 8 \\[3pt]
\frac{3\m-1}{4} & \textrm{if } m\equiv 6 \pmod 8
\end{array}
\right.$$
and take the paths:
\begin{eqnarray*}
\widetilde R_{j}& = &\left\{
\begin{array}{ll}
\left[0, 2m\kappa+(4j-1) \right] & \textrm{if } j=1, \ldots, \frac{\m -1}{2} \\
\left[0, 2m\kappa+(4j+1) \right] &  \textrm{if } j=\frac{\m+1}{2},\ldots,\m-1 \textrm{ and  } j\neq u
\end{array}\right. ;\\
\widetilde S & = & [0, (2\kappa+1)m +(4\m -1),1,(2\kappa+1)m +(4\m -2),2,\ldots,\\
&& \m-1,(2\kappa+1)m +3\m ].
\end{eqnarray*}
As we have seen in Theorem \ref{thm:m_pari}, $A_{i,j}=[P_{i,j}]_2$ and $B_{i}=[Q_{i,1}]_{m}$ are hamiltonian cycles of $\Kmn$ for any $i,j$.
Further, consider $C_{i,j}=[\widetilde P_{i,j}]_2$, $D_i=[\widetilde Q_{i}]_{m}$, $E_j=[\widetilde R_j]_2$, $F=[\widetilde S]_{m}$ and $G=[0]_\nu$.  It easy to see that they are all hamiltonian cycles of $\Kmn$ and that:
for $j=1,\ldots,\frac{\m-1}{2}$
$$\partial C_{i,j}=\pm\{(2i+1)m+(4j-3),(2i+1)m+(4j-1)\}$$
and for $j=\frac{\m+1}{2},\ldots,\m-1$
$$\partial C_{i,j}=\pm\{(2i+1)m+(4j-1), (2i+1)m+(4j+1)\}.$$
Also,
\begin{eqnarray*}
\partial D_{i} & =& \pm (\{(2i+1)m+2, (2i+1)m+4,(2i+1)m+6,\ldots, (2i+1)m+\\
&&+2\m-2\} \cup \{(2i+1)m+2\m+2,(2i+1)m+2\m+4,\ldots,(2i+1)m+\\
&&+ 4\m-2\} \cup \{(2i+1)m+2\m-1,(2i+1)m+4\m-1\});
\end{eqnarray*}
moreover, for $j=1,\ldots,\frac{\m-1}{2}$
$$\partial E_{j}=\pm\{2m\kappa+(4j-3),2m\kappa+(4j-1)\}$$
and for $j=\frac{\m+1}{2},\ldots,\m-1$ with $j\neq u$
$$\partial E_{j}=\pm\{2m\kappa+(4j-1), 2m\kappa+(4j+1)\}.$$
Finally,
$$\partial F=\pm (\{(2\kappa+2)m+1,(2\kappa+2)m+2,\ldots, (2\kappa+3)m-1 \}\cup \{2m\kappa+3\m\} )$$
and $\partial G=\pm \{\nu\}$.

Let $\B=\left(\cup_{i,j} A_{i,j}\right)\cup \left(\cup_{i} B_{i}\right)\cup\left(\cup_{i,j} C_{i,j}\right)\cup \left(\cup_{i} D_{i}\right)
\cup \left(\cup_{j} E_{j}\right)\cup F\cup G$.
It is routine to check that $\partial \B=\Z_{mn}-m\Z_{mn}$,
hence we conclude that $\B$ is a set of base cycles of a cyclic HCS of $\Kmn$.
Once more, it is easily checked using Lemma  \ref{le:sym} that this cycle system is
also $\phi_n$-symmetric, since in all cases the length of the orbit of each cycle divides $m$.
\end{proof}

We point out that the base cycles used in Example \ref{ex:10x6} were constructed following the proof of Theorem \ref{thm:mn2}. In particular, according to the notation of the Theorem we have
$$C_1=B_0,\quad C_2=F,\quad C_3=E_1,\quad C_4=E_2,\quad C_5=E_3, \quad C_6=G.$$

\begin{ex}
Here we present a set of base cycles of a cyclic
and $\phi_{14}$-symmetric HCS of $K_{6\times 14}$.
In the notation of Theorem \ref{thm:mn2}, $\m=t=3$ and we choose $\kappa=1$ and $\nu=19$ which is coprime with $6\cdot 14$.
Following the proof of the Theorem we have to take the following cycles:
$$A_{0,1}  =  [0,3]_2, \quad A_{0,2}  =  [0,9]_2, \quad
B_{0}=[0,11,1,9,4,8]_{6},\quad B_{1}=[0,23,1,21,4,20]_{6},$$
$$C_{2,1}  =  [0,33]_2, \quad C_{2,2}  =  [0,39]_2, \quad
D_{2}=[0,41,1,39,4,38]_{6}, \quad
E_1=[0,15]_2,$$
$$F=[0,29,1,28,2,27]_6,\quad G=[0]_{19}.$$
It results
$$\partial \left(\ccup_{i=1}^{2}  A_{0,i}\right)=\pm\{1,3,7,9\},\;\; \partial \left(\ccup_{i=0}^{1} B_{i}\right)=\pm\{2,4,5,8,10,11,14,16,17,20,22,23\},$$
$$\partial \left(\ccup_{j=1}^{2} C_{2,j}\right)=\pm\{31,33,37,39\},\quad \partial D_2=\{32,34,35,38,40,41\},$$
$$\quad\partial E_1=\pm\{13,15\},\quad \partial F=\pm\{21,25,26,27,28,29\},\quad \partial G=\pm\{19\}.$$
So, called $\B$ the union of the constructed cycles, we have $\partial \B=\Z_{84}-6\Z_{84}$.
\smallskip

Now, we give a set of base cycles of a cyclic
 and $\phi_{10}$-symmetric HCS of $K_{10\times 10}$.
 In the notation of Theorem \ref{thm:mn2},
 $\m=5$, $t=2$ and we choose $\kappa=1$ and $\nu=37$ which is coprime with $100$.
 We have to take the following cycles:
$$A_{0,1}=[0,3]_2,\quad A_{0,2}=[0,7]_2,\quad A_{0,3}=[0,13]_2,\quad A_{0,4}=[0,17]_2,$$
$$B_{0}=[0,19,1,17,3,15,6,14,8,12]_{10},\quad
B_{1}=[0,39,1,37,3,35,6,34,8,32]_{10},$$
$$
E_1=[0,23]_2,\quad E_2=[0,27]_2,\quad E_3=[0,33]_2,$$
$$F=[0,49,1,48,2,47,3,46,4,45]_{10},\quad G=[0]_{37}.$$
We have:
$$\partial \left(\ccup_{i=1}^{4} A_{0,i}\right)=\pm\{1,3,5,7,11,13,15,17\},$$
$$\partial \left(\ccup_{i=0}^{1} B_{i}\right)=\pm\{2,4,6,8,9,12,14,16,18,19,22,24,26,28,29,32,34,36,38,39\},$$
$$\partial \left(\ccup_{j=1}^{3} E_{j}\right)=\pm\{21,23,25,27,31,33\},$$
$$\partial F=\pm\{35,41,42,43,44,45,46,47,48,49\}, \quad \partial G=\pm\{37\}.$$
Hence, called $\B$ the union of the constructed cycles, we have $\partial \B=\Z_{100}-10\Z_{100}.$
\end{ex}

\end{document}